\def\R{{\mathbb R}}
\def\N{{\mathbb N}}
\def\Z{{\mathbb Z}}
\def\CA{\mathcal{A}}
\def\ds{\displaystyle}
\def\b{\beta}
\def\D{\Delta}
\def\eps{\varepsilon}
\def\f{\varphi}
\def\t{\theta}
\def\lam{\lambda}
\def\dth{\dot \theta}
\def\dlam{\dot \lam}
\def\dx{\dot x}
\def\dy{\dot y}
\def\dq{\dot q}
\def\dh{\dot h}
\def\tcut{t_{\operatorname{cut}}}
\def\tconj{t_1^{\operatorname{conj}}}
\def\tt{\mathbf t}
\newcommand{\am}{\operatorname{am}\nolimits}
\newcommand{\sn}{\operatorname{sn}\nolimits}
\newcommand{\cn}{\operatorname{cn}\nolimits}
\newcommand{\dn}{\operatorname{dn}\nolimits}
\newcommand{\sgn}{\operatorname{sgn}\nolimits}
\newcommand{\const}{\operatorname{const}\nolimits}
\newcommand{\Id}{\operatorname{Id}\nolimits}
\newcommand{\E}{\operatorname{E}\nolimits}
\renewcommand{\Vec}{\operatorname{Vec}\nolimits}
\newcommand{\Exp}{\operatorname{Exp}\nolimits}
\newcommand{\cl}{\operatorname{cl}\nolimits}
\def\vH{\vec H}
\def\vP{\vec P}
\newcommand{\der}[2]{\frac{d \, #1}{d\, #2} }
\newcommand{\pder}[2]{\frac{\partial \, #1}{\partial \, #2} }
\newcommand{\be}[1]{\begin{equation}\label{#1}}
\newcommand{\ee}{\end{equation}}
\newcommand{\ddef}[1]{{#1}}
\newcommand{\map}[3]{#1 \, : \, #2 \to #3}
\newcommand{\mapto}[3]{#1 \, : \, #2 \mapsto #3}
\newcommand{\vect}[1]{\left( \begin{array}{c} #1 \end{array} \right)}
\newcommand{\eq}[1]{$(\protect\ref{#1})$}
\newcommand{\restr}[2]{\left. #1 \right|_{#2}}
\def\tlam{\widetilde{\lambda}}
\def\tM{\widetilde{M}}
\def\tN{\widetilde{N}}
\def\tq{\widetilde{q}}
\def\hN{\widehat{N}}
\def\btau{\bar{\tau}}
\def\bp{\bar{p}}
\def\bk{\bar{k}}
\def\bq{\bar{q}}
\def\bt{\bar{t}}
\def\bu{\bar{u}}
\def\bp{\bar{p}}
\def\blam{\bar{\lam}}
\def\oR{\overline{\R}}
\def\ts{\,{\sn \tau}\,}
\def\tc{\,{\cn \tau}\,}
\def\td{\,{\dn \tau}\,}
\def\ss{\,{\sn  p}\,}
\def\cc{\,{\cn  p}\,}
\def\dd{\,{\dn  p}\,}
\def\tsp{\,{\sn^2 \tau}\,}
\def\ssp{\,{\sn^2  p}\,}
\def\ccp{\,{\cn^2  p}\,}
\def\ddp{\,{\dn^2  p}\,}
\def\then{\quad\Rightarrow\quad}
\def\iff{\quad\Leftrightarrow\quad}
\newcommand{\twofiglabel}[6]
{
\begin{figure}[htbp]
\includegraphics[width=0.47\textwidth]{#1}
\hfill
\includegraphics[width=0.47\textwidth]{#4}
\\
\parbox[t]{0.45\textwidth}{\caption{#2}\label{#3}}
\hfill
\parbox[t]{0.45\textwidth}{\caption{#5}\label{#6}}
\end{figure}
}
\newcommand{\onefiglabel}[3]
{
\begin{figure}[htbp]
\begin{center}
\includegraphics[width=0.47\textwidth]{#1}
\\
\parbox[t]{0.45\textwidth}{\caption{#2}\label{#3}}
\end{center}
\end{figure}
}
\newtheorem{theorem}{Theorem}[section]
\newtheorem{lemma}{Lemma}[section]
\newtheorem{corollary}{Corollary}[section]
\newtheorem{proposition}{Proposition}[section]
\theoremstyle{remark}
\title{Exponential mapping in Euler's elastic problem%
\footnote{Work supported by the Russian Foundation for Basic Research, project
No.~12-01-00913-a.}
}
\author{
Yu. L. Sachkov,  E. F. Sachkova\\
Program Systems Institute \\
Russian Academy of Sciences \\
Pereslavl-Zalessky 152020 Russia \\
E-mail: sachkov@sys.botik.ru}
\date{\today}
\begin{document}

\maketitle

\begin{abstract}
The classical Euler's problem on optimal configurations of elastic rod in the plane with fixed endpoints and tangents at the endpoints is considered. 
The global structure of the exponential mapping that parameterises extremal trajectories is described. It is proved that open domains cut out by Maxwell strata in the preimage and image of the exponential mapping  are mapped diffeomorphically. As a consequence, computation of globally optimal elasticae with given boundary conditions is reduced to solving systems of algebraic equations having unique solutions in the open domains. For certain special boundary conditions, optimal elasticae are presented.

\bigskip
\noindent
\textbf{\small Keywords:} 
Euler elastica, optimal control,    exponential mapping

\bigskip
\noindent
\textbf{\small Mathematics Subject Classification:}
49J15, 93B29, 93C10, 74B20, 74K10, 65D07

\end{abstract}

\section{Introduction}\label{sec:intro}
This work is devoted to the study of the following problem considered by Leonhard Euler~\cite{euler,love}. Given an elastic rod in the plane with fixed endpoints and tangents at the endpoints, one should determine possible profiles of the rod under the given boundary conditions. Euler's problem can be stated as the following optimal control problem:
\begin{align}
&\dx = \cos \t, \label{sys1} \\
&\dy = \sin \t, \label{sys2} \\
&\dot \t = u, \label{sys3} \\
&q=(x,y,\t) \in M= \R^2_{x,y} \times S^1_{\t}, \qquad u \in \R,  \label{sys4} \\
&q(0) = q_0 = (x_0, y_0, \t_0), \qquad
q(t_1) = q_1 = (x_1, y_1, \t_1),
\qquad t_1 \text{ fixed}, \label{sys5}\\
&J = \frac{1}{2} \int_0^{t_1} u^2(t) \, dt \to \min. \label{sys6}
\end{align}
where the integral $J$ evaluates the elastic energy of the rod $(x(t), y(t))$.

This paper is an immediate  continuation of the previous works~\cite{el_max, el_conj}, which contained the following material: history of the problem, description of attainable set, proof of existence and boundedness of optimal controls, parameterisation of extremals by Jacobi's functions, description of discrete symmetries and the corresponding Maxwell points, bounds on cut time and conjugate time. In this work we widely use the notation, definitions, and results of work~\cite{el_max, el_conj}.

The upper bound of cut time on extremal trajectories via Maxwell points obtained in~\cite{el_max, el_conj} allows to get rid of necessarily non-optimal candidates in the search of optimal trajectories. There were left open questions on the number of remaining candidates for optimal trajectories, and on the number of optimal trajectories with given boundary conditions. This paper answers these questions. We show that for generic boundary conditions there remain two candidates for optimal trajectories that satisfy the upper bound on cut time obtained in~\cite{el_max, el_conj}. We prove that the search of these two candidates can be reduced to solving systems of algebraic equations having unique solutions in certain domains. After these candidates are computed, it remains to compare their costs and find the trajectory with the less cost.
For generic boundary conditions (where the costs of two candidates differ one from another) there is a unique optimal trajectory. If the two candidates have the same cost, then there are two optimal trajectories with the given boundary conditions.

Further, we consider several families of special boundary conditions and specify optimal trajectories for them. We present examples of boundary conditions with 1, 2, and 4 optimal trajectories (we believe no other numbers of optimal trajectories  occur).

The structure of this paper is as follows. In Sec.~\ref{sec:known} we recall some necessary results of the previous works~\cite{el_max, el_conj}. In particular, we recall definition of the exponential mapping that parameterises endpoints of extremal trajectories at a given instant of time. In Sec.~\ref{sec:decompos} we introduce decompositions of preimage and image of the exponential mapping into certain open domains and their boundary, and prove some topological properties of this decomposition. In Sec.~\ref{sec:diffeo} we show that restriction of the exponential mapping to these open domains is a diffeomorphism, which guarantees unique solvability of algebraic equations for candidates for optimal trajectories. In Sec.~\ref{sec:boundary} we describe the action of the exponential mapping on the boundary of the diffeomorphic domains. Finally, in Sec.~\ref{sec:various} we describe optimal trajectories for various boundary conditions. 

\section{Previous results on Euler's problem}\label{sec:known}
In this section we recall some necessary results of the previous works~\cite{el_max, el_conj}.

By virtue of  parallel translations and rotations  in the plane $\R^2_{x,y}$ (problem~\eq{sys1}--\eq{sys6} is left-invariant on the group of motions of the plane), we can assume that 
\be{q0=0}
q_0 = (x_0, y_0, \t_0) = (0, 0, 0).
\ee
Moreover, due to the following one-parameter group of symmetries (dilations in the plane $\R^2_{x,y}$):
\be{group1}
(x,y,\t,t,u,t_1,J) \mapsto (\tilde x, \tilde y, \tilde \t, \tilde t, \tilde u, \tilde t_1, \tilde J) = (e^sx, e^sy, \t, e^st, e^{-s}u, e^st_1, e^{-s}J),
\ee
 we can assume that the terminal time (length of elastica) is $t_1 = 1$.
 
 Attainable set of system~\eq{sys1}--\eq{sys4} from the point $q_0 =  (0, 0, 0)$ for time $t_1 = 1$ is
\begin{align*}
\CA =
\{ (x,y,\t) \in M 
&\mid
x^2 + y^2 < 1
\text{ or }
(x,y,\t) = (1, 0, 0) \},
\end{align*}
see Th.~4.1~\cite{el_max}. For any $q_1 \in \CA$, there exists an optimal trajectory that satisfies Pontryagin maximum principle (Th.~5.3~\cite{el_max}). 

Denote the vector fields in the right-hand side of system~\eq{sys1}--\eq{sys3} and their Lie bracket:
$$
X_1 = \cos \t \pder{}{x} + \sin \t \pder{}{y},
\qquad X_2 = \pder{}{\t}, \qquad
 X_3 = [X_1, X_2] = \sin \t \pder{}{x} - \cos \t \pder{}{y}.
$$
Consider the corresponding Hamiltonians, linear on fibers in the cotangent bundle $T^*M$:
$$
h_i(\lam) = \langle \lam, X_i\rangle,
\qquad \lam \in T^*M, \quad i = 1, 2, 3.
$$
The normal Hamiltonian of Pontryagin maximum principle for the elastic problem is
$\ds H = h_1 + \frac12 h_2^2$,
and the corresponding \ddef{normal Hamiltonian system of PMP} reads
\be{dh1-h2h3}
\dlam = \vH(\lam) \iff
\begin{cases}
\dh_1 = -h_2h_3, \\
\dh_2 = h_3, \\
\dh_3 = h_1 h_2, \\
\dq = X_1 + h_2 X_2.
\end{cases}
\ee
The vertical subsystem of system~\eq{dh1-h2h3} has an obvious integral:
$$
h_1^2 + h_3^2 \equiv r^2 = \const \geq 0,
$$
and it is natural to introduce the coordinates
$$
h_1 = - r \cos \b, \qquad h_3 = -r \sin \b, \qquad h_2 = c.
$$
Then the normal Hamiltonian system~\eq{dh1-h2h3} takes the following form:
\be{dadrdh2}
\begin{cases}
\dot \b = c, \\
\dot c = - r \sin \b, \\
\dot r = 0, \\
\dx = \cos \t, \\
\dy = \sin \t, \\
\dth = c.
\end{cases}
\ee
The total energy of the equation of pendulum 
\be{pend}
\dot \b = c, \qquad
\dot c = - r \sin \b, \qquad
\dot r = 0
\ee
 is
\be{E_r}
E = \frac{c^2}{2} - r \cos \b \in [-r, + \infty).
\ee
The normal Hamiltonian system~\eq{dadrdh2} was integrated in~\cite{el_max}.

The time $t$ \ddef{exponential mapping} for the problem is defined as follows:
$$
\map{\Exp_{t}}{N = T_{q_0}^*M}{M},
\qquad
\Exp_{t}(\lam_0) = \pi \circ e^{t \vH}(\lam_0) = q(t).
$$
We will denote the exponential mapping for time $t_1 = 1$ as $\Exp$. 

Preimage $N$ of the exponential mapping admits the following decomposition into disjoint subsets:
\begin{align}
&N = \bigsqcup_{i=1}^7 N_i, \label{N_decomp} \\
&N_1 = \{ \lam \in N \mid r \neq 0, \ E \in (-r, r)\}, \label{N1} \\
&N_2 = \{ \lam \in N \mid r \neq 0, \ E \in (r, + \infty)\} = N_2^+ \sqcup N_2^-, \label{N2} \\
&N_3 = \{ \lam \in N \mid r \neq 0, \ E  = r, \ \b \neq \pi \} = N_3^+ \sqcup N_3^-, \label{N3}  \\
&N_4 = \{ \lam \in N \mid r \neq 0, \ E =-r \}, \label{N4} \\
&N_5 = \{ \lam \in N \mid r \neq 0, \ E =r, \ \b = \pi \}, \label{N5} \\
&N_6 = \{ \lam \in N \mid r = 0, \ c \neq 0 \} = N_6^+ \sqcup N_6^-,\label{N6} \\
&N_7 = \{ \lam \in N \mid r = c =  0\}, \label{N7} \\
&N_i^{\pm} = N_i \cap \{ \lam \in N \mid \sgn c = \pm 1 \}, \qquad i = 2, \ 3, \ 6. \label{Ni+-}
\end{align}

In Sec.~7~\cite{el_max} were introduced elliptic coordinates $(\f, k, r)$ in the domain $N_1 \cup N_2 \cup N_3$ which rectify the flow of  pendulum~\eq{pend}:
\be{rectify}
\dot \f = 1, \qquad \dot k = \dot r = 0.
\ee
These coordinates have the following ranges:
\begin{align*}
&\lam = (\f, k, r) \in N_1 \then r > 0, \quad k \in (0, 1), \quad \f \in \R \pmod{ 4 K/\sqrt r}, \\ 
&\lam = (\f, k, r) \in N_2 \then r > 0, \quad k \in (0, 1), \quad \f \in \R \pmod{ 2 K k/\sqrt r}, \\ 
&\lam = (\f, k, r) \in N_3 \then r > 0, \quad k = 1, \quad \f \in \R,  
\end{align*}
where $K(k)$ is the complete elliptic integral of the first kind~\cite{whit_watson}.

Further, in~\cite{el_max} were introduced  coordinates $(p, \tau, k)$ in the domain $N_1 \cup N_2 \cup N_3$ as follows:
 \begin{align*}
&\lam   \in N_1 \then p = \frac{\sqrt r}{2} > 0, \quad \tau = \sqrt r \left(\f + \frac 12\right) \in \R \pmod{ 4 K}, \quad k \in (0, 1),\\  
&\lam   \in N_2 \then p = \frac{\sqrt r}{2 k} > 0, \quad \tau = \frac{\sqrt r}{k} \left(\f + \frac 12\right) \in \R \pmod{ 2 K}, \quad k \in (0, 1),\\
&\lam   \in N_3 \then p = \frac{\sqrt r}{2} > 0, \quad \tau = \sqrt r \left(\f + \frac 12\right) \in \R, \quad k =1.
\end{align*}

In~\cite{el_max, el_conj} was obtained the upper bound~\eq{tcutbound} of the cut time 
$$
\tcut = \sup \{t_1 > 0  \mid \text{ extremal trajectory } q(t) \text{ is optimal on } [0, t_1]\}
$$
in terms of the following function:
\begin{align}
&\map{\tt}{N}{(0, + \infty]}, \qquad \lam \mapsto \tt(\lam), \label{tt} \\
&\lam \in N_1 \then \tt = \frac{2}{\sqrt r} p_1(k), \label{ttN1} \\
&\qquad
p_1(k) = \min(2K(k), p_1^1(k)) =
\begin{cases}
2 K(k), & k \in (0, k_0] \\
p_1^1(k), &k \in [k_0, 1)
\end{cases}
\label{p1(k)N1}\\
&\lam \in N_2 \then \tt = \frac{2k}{\sqrt r} p_1(k), \qquad p_1(k) = K(k),
\label{ttN2} \\
&\lam \in N_6 \then \tt = \frac{2\pi}{|c|}, \nonumber\\
&\lam \in N_3\cup N_4\cup N_5\cup N_7 \then \tt = + \infty. \label{ttN3}
\end{align}
Here $p = p_1^1(k)$ is the first positive root of the equation
\be{f1=0}
f_1(p,k) = \ss \dd - (2 \E(p) - p) \cc = 0, \qquad p \in  (K, 3 K),
\ee
(see Propos.~11.6~\cite{el_max}), 
where $\ss$, $\cc$, $\dd$ are Jacobi's elliptic functions, $\E(p) = \int_0^p \dn^2 t \, dt$,
and $k_0$ is the unique root of the equation $2 E(k) - K(k) = 0$ (see Propos.~11.5~\cite{el_max}). Here and below $E(k)$ is the complete elliptic integral of the second kind~\cite{whit_watson}.

\section{Decompositions in preimage and image \\of exponential mapping}\label{sec:decompos}
Existence of optimal controls implies that
the mapping $\map{\Exp}{N}{\CA}$ is surjective.
Theorem~5.1~\cite{el_conj} states that
\be{tcutbound}
\forall \lam \in N \qquad \tcut(\lam) \leq \tt(\lam).
\ee
Thus for any $\lam \in N$ with $\tt(\lam) < 1$,  the extremal trajectory $q(t) = \Exp_t(\lam)$ is not optimal at the segment $t \in [0, 1]$. Consequently, for any $q_1 \in \CA$ there exists an optimal trajectory $\tq(t) = \Exp_t(\tlam)$, $\lam \in N$,  $t \in [0, 1]$, such that $q(1) = q_1$, so $\tt(\tlam) \geq 1$. Define the corresponding set
$$
\hN = \{ \lam \in N \mid \tt(\lam) \geq 1 \}.
$$
Then 
the mapping $\map{\Exp}{\hN}{\CA}$ is surjective.

\subsection{Definition of decomposition in preimage \\of exponential mapping}
Introduce the following decomposition of the set $\hN$:
\begin{align}
&\hN = \tN \sqcup N', \label{NhatNtilde}\\
&\tN = \{ \lam \in \cup_{i=1}^3 N_i \mid \tt(\lam) > 1, \ \tc \ts \neq 0\},  \nonumber\\
&N' = N'_{1-3} \sqcup N_4 \sqcup N_5 \sqcup \hN_6 \sqcup N_7, \label{N'}\\
&N'_{1-3} =  \{ \lam \in \cup_{i=1}^3 N_i \mid \tt(\lam) = 1 \text{ or } \tc \ts = 0\},  \nonumber\\
&\hN_6 = N_6 \cap \hN.    \nonumber
\end{align}
Moreover, the set $\tN$ naturally decomposes as follows:
\be{tNLi}
\tN = \bigsqcup_{i=1}^4 L_i,
\ee
with the sets $L_i$ defined by Table~\ref{tab:Li}.

\begin{table}[htbp]
$$
\begin{array}{|c|c|c|c|c|}
\hline
L_i & L_1 & L_2 & L_3 & L_4  \\
\hline
\lam & N_1 & N_1 & N_1   & N_1   \\
\tau & (0,K) & (K,2K) & (2K,3K) & (3K,4K)   \\
p    & (0,p_1) & (0,p_1) & (0,p_1) & (0,p_1)  \\
k & (0,1)& (0,1)& (0,1)& (0,1)\\
\hline
\lam & N_2^+ & N_2^- & N_2^-  & N_2^+  \\
\tau & (0,K) & (-K,0) & (0,K) & (-K,0)   \\
p    & (0,K) & (0,K) & (0,K) & (0,K)  \\
k & (0,1)& (0,1)& (0,1)& (0,1)\\
\hline
\lam & N_3^{+} & N_3^{-} & N_3^{-}  & N_3^{+}  \\
\tau & (0, +\infty) & (-\infty, 0) & (0, +\infty) & (-\infty, 0)   \\
p    & (0,+\infty) & (0, +\infty) & (0,+\infty) & (0,+\infty)  \\
k & 1& 1& 1& 1\\
\hline
 \end{array}
$$ 
\caption{Definition of domains $L_i$}\label{tab:Li}
\end{table}

Table~\ref{tab:Li} should be read by columns. For example, the first column means that
\begin{align}
&L_1 = (L_1 \cap N_1) \sqcup (L_1 \cap N_2^+) \sqcup (L_1 \cap N_3^+), \label{L1decomp} \\
&L_1 \cap N_1 = \{(\tau,p,k) \in N_1 \mid   \tau \in (0, K(k)), \ p \in (0, p_1(k)), \ k \in (0,1)\}, \label{L1N1}\\
&L_1 \cap N_2^+ = \{(\tau,p,k) \in N_2^+ \mid  \tau \in (0, K), \ p \in (0, K(k)), \ k \in (0,1)\}, \label{L1N2}\\
&L_1 \cap N_3^+ = \{(\tau,p, k) \in N_3^+ \mid  \tau \in (0, +\infty), \ p \in (0, +\infty), \ k = 1\}. \label{L1N3}
\end{align}

Decomposition~\eq{tNLi} is schematically shown at Fig.~\ref{fig:decompN}. At this figure the horizontal plane is the state space of pendulum~\eq{pend}, the vertical separating planes are defined by equations $\ts = 0$, $\tc = 0$, the vertical axis is $p$, and the upper surface is defined by the equation $\tt(\lam) = 1$.

\onefiglabel{figl14_v2}{Decomposition in $N$}{fig:decompN} 

\subsection{Auxiliary lemmas}

\begin{lemma}\label{lem:f1><0}
Let $k \in (0,1)$, and let $p = p_1^1(k)$ be the root of equation~\eq{f1=0}. Then 
\begin{align*}
&p \in (0, p_1^1) \then f_1(p) > 0, \\
&p \in (p_1^1, 3 K) \then f_1(p) < 0.
\end{align*}
\end{lemma}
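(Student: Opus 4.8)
The plan is to reduce the whole sign analysis to the monotonicity of a single rescaled function. First I would record the landmark values $f_1(0)=0$, $f_1(K)=\sqrt{1-k^2}>0$ (from $\sn K=1$, $\cn K=0$, $\dn K=\sqrt{1-k^2}$), and observe by differentiating $f_1(p)=\ss\dd-(2\E(p)-p)\cc$ that, using the Jacobi rules $\frac{d}{dp}\sn=\cn\dn$, $\frac{d}{dp}\cn=-\sn\dn$, $\frac{d}{dp}\dn=-k^2\sn\cn$, $\frac{d}{dp}\E(p)=\ddp$, together with $\ddp=1-k^2\ssp$, the $\cc$-terms cancel completely and leave the clean expression
$$f_1'(p)=(2\E(p)-p)\,\ss\,\dd.$$

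The conceptual crux is that on each interval where $\cc\neq0$ the auxiliary function $F(p):=\dfrac{\ss\,\dd}{\cc}-(2\E(p)-p)$, for which $f_1=F\,\cc$, has derivative
$$F'(p)=\frac{\ssp\,\ddp}{\ccp}\ge0,$$
which follows from the same substitution $\ddp=1-k^2\ssp$ after a routine cancellation. Thus $F$ is (strictly) increasing between consecutive zeros of $\cc$. I would then analyze the poles of $F$ at $\cc=0$: as $p\to K^{\mp}$ one has $\cc\to0^{\pm}$ while $\ss\,\dd\to\sqrt{1-k^2}>0$, so $F\to\pm\infty$; as $p\to 3K^{-}$ one has $\cc\to0^{-}$ while $\ss\,\dd\to-\sqrt{1-k^2}$, so again $F\to+\infty$; and $F$ is continuous at $p=2K$ since $\cc(2K)=-1\neq0$. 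Consequently $F$ is continuous and strictly increasing on the entire interval $(K,3K)$, running monotonically from $-\infty$ to $+\infty$, hence it has a unique zero there. Because $\cc\neq0$ on $(K,3K)$, the relation $f_1=F\,\cc$ shows $f_1$ vanishes on $(K,3K)$ exactly where $F$ does, so this unique zero is precisely the root $p_1^1$.

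Assembling the signs then finishes the argument: on $(0,K)$ we have $F(0)=0$ and $F'>0$, so $F>0$, and since $\cc>0$ there, $f_1=F\,\cc>0$; together with $f_1(K)=\sqrt{1-k^2}>0$ this gives $f_1>0$ on $(0,K]$. On $(K,3K)$ we have $\cc<0$, so $f_1=F\,\cc$ is positive exactly where $F<0$, namely on $(K,p_1^1)$, and negative where $F>0$, namely on $(p_1^1,3K)$. This yields both asserted inequalities, and notably avoids any case split on the sign of $2E(k)-K(k)$ (i.e.\ on $k$ versus $k_0$), which only governs whether $p_1^1$ lies in $(K,2K)$ or $(2K,3K)$. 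I expect the only delicate points to be the verification of the two derivative identities (several elliptic-function cancellations) and the careful one-sided limits of $F$ at the zeros of $\cc$; everything after that is immediate from monotonicity.
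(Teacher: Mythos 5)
Your proof is correct and follows essentially the same route as the paper: your $F$ is exactly the paper's auxiliary function $g_1=f_1/\cn p$, with the same derivative identity $g_1'=\sn^2 p\,\dn^2 p/\cn^2 p\ge 0$ driving the whole sign analysis. The only differences are cosmetic --- you re-derive uniqueness of the root on $(K,3K)$ from the one-sided limits of $F$ at the zeros of $\cn p$, where the paper simply invokes the definition of $p_1^1$ as the first positive root, and you anchor positivity on $(0,K)$ at $F(0)=0$ rather than via the expansion $g_1(p)=p^3/3+o(p^3)$.
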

\begin{proof}
The function $\ds g_1(p) = \frac{f_1(p)}{\cc}$ is increasing at each interval $(K + 2 Kn, 3 K + 2 Kn)$, $n \in \Z$, 
since $\ds\pder{g_1}{p} = \frac{\ssp \ddp}{\ccp} \geq 0$. We have $\ds g_1(p) = \frac{p^3}{3} + o(p^3)$ as $p \to 0$, so  $g_1(p) > 0$ for $p \in (0, K)$, thus $f_1(p) > 0$ for $p \in (0, p_1^1)$. Further, the function $g_1(p)$ changes sign at $p_1^1$, thus $f_1(p)$ changes its sign at $p_1^1$ as well. 
\end{proof}

\begin{lemma}\label{lem:p11}
The function $p = p_1^1(k)$, $k \in (0,1)$, defined by~\eq{f1=0} satisfies the following properties:
\begin{itemize}
\item[$(1)$] 
$p_1^1(k)$  is continuous on the interval $(0, 1)$,
\item[$(2)$] 
$p_1^1(k)$  is smooth on the intervals $(0, k_0) \cup (k_0, 1)$.
\end{itemize}
\end{lemma}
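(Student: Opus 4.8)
The plan is to exhibit $p_1^1(k)$ as the unique zero in $p$ of a smooth function and then apply the Implicit Function Theorem, with the value $k_0$ appearing precisely as the point where the transversality hypothesis of the IFT breaks down. First I would restrict to the strip $p \in (K, 3K)$, $k \in (0,1)$, where the root lives by definition. On this strip $\cc \neq 0$ (indeed $\cn p$ vanishes only at odd multiples of $K$), so $f_1 = 0$ is equivalent to $g_1 = 0$, where $g_1(p,k) = \ss\dd/\cc - (2\E(p)-p)$ is the function from the proof of Lemma~\ref{lem:f1><0}. Since $\sn$, $\cn$, $\dn$ and $\E$ depend real-analytically on $(p,k)$ for $k \in (0,1)$ and $\cc \neq 0$ here, $g_1$ is smooth on this strip. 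By that same proof, $\partial g_1/\partial p = \ssp\ddp/\ccp \geq 0$, vanishing only where $\ss = 0$ (note $\dd \geq \sqrt{1-k^2} > 0$), so $g_1(\cdot,k)$ is strictly increasing and $p_1^1(k)$ is its unique zero.

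For continuity (claim $(1)$), I would fix $k^* \in (0,1)$ and set $p^* = p_1^1(k^*) \in (K(k^*), 3K(k^*))$. For small $\eps > 0$, strict monotonicity gives $g_1(p^*-\eps, k^*) < 0 < g_1(p^*+\eps, k^*)$; by joint continuity of $g_1$ and of $K(k)$ these strict inequalities persist for $k$ near $k^*$ (with $p^*\pm\eps$ staying inside $(K(k),3K(k))$), forcing $p_1^1(k) \in (p^*-\eps, p^*+\eps)$. This yields continuity at each $k^*$, hence on all of $(0,1)$.

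For smoothness (claim $(2)$), the IFT applied to $g_1 = 0$ produces a smooth branch $p_1^1(k)$ at every point where $\partial g_1/\partial p \neq 0$. On $(K,3K)$ this derivative vanishes only at $p = 2K$, so the only possible failure is when $p_1^1(k) = 2K$. The key computation is to evaluate $f_1$ there: using $\sn 2K = 0$, $\cn 2K = -1$, $\dn 2K = 1$ and $\E(2K) = 2E(k)$, one finds $f_1(2K,k) = 2(2E(k) - K(k))$, which vanishes precisely at the unique root $k_0$ of $2E(k) - K(k) = 0$. Hence $p_1^1(k) = 2K \iff k = k_0$, so for every $k \neq k_0$ we have $\partial g_1/\partial p \neq 0$ at the root and the IFT delivers smoothness on $(0,k_0) \cup (k_0,1)$.

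The main obstacle, and the reason $k_0$ must be excluded, is exactly this degeneracy: at $k_0$ the root passes through $p = 2K$, where $\ss = 0$ makes $\partial_p g_1$ vanish and the transversality needed for the IFT is lost, so one cannot expect smoothness there. The delicate step is therefore to pin this degeneracy to $k_0$ through the identity $f_1(2K,k) = 2(2E(k)-K(k))$, and to confirm that no other zero of $\partial_p g_1$ can meet the root curve, which is guaranteed by $\dd > 0$ for $k \in (0,1)$.
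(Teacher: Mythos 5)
Your proof is correct and takes essentially the same route as the paper, whose entire proof is the one line ``follows by the implicit function theorem.'' You supply exactly the details that line leaves implicit: the monotonicity $\partial g_1/\partial p = \ssp\ddp/\ccp \geq 0$ from Lemma~\ref{lem:f1><0} gives uniqueness of the root in $(K,3K)$ and hence continuity on all of $(0,1)$ via the sign-change argument, and the identity $f_1(2K(k),k) = 2\left(2E(k) - K(k)\right)$ correctly pins the failure of transversality (where $\ss=0$ at the root) to the single value $k = k_0$, which is precisely why smoothness is only claimed on $(0,k_0)\cup(k_0,1)$.
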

\begin{proof}
Follows by the implicit function theorem.
\end{proof}

A plot of the function $p_1^1(k)$ is given in Fig.~\ref{fig:p11}. Notice the vertical tangent at the point $(k, p) = (k_0, 2 K(k_0))$, $k_0 \approx 0.902$, and the vertical asymptote $k = 1$.

\twofiglabel{p11k}{Plot of the function $p_1^1(k)$}{fig:p11}
{u11k}{Plot of the function $\am(p_1^1(k), k)$}{fig:u11k} 

\begin{corollary}\label{cor:p1}
The function $\map{p_1}{(0,1)}{(0, + \infty)}$ given by~\eq{p1(k)N1} is continuous.
\end{corollary}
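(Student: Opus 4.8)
The cleanest route is to read the defining formula~\eqref{p1(k)N1} not through its case split but through the outer minimum: $p_1(k) = \min\bigl(2K(k), p_1^1(k)\bigr)$. The plan is to verify that each of the two functions under the minimum is continuous on the whole interval $(0,1)$ and then invoke the elementary fact that the pointwise minimum of two continuous real-valued functions is continuous. Concretely, $K(k)$ is a smooth function of $k$ on $(0,1)$ (a standard property of the complete elliptic integral of the first kind), so $k \mapsto 2K(k)$ is continuous; and $p_1^1(k)$ is continuous on $(0,1)$ by Lemma~\ref{lem:p11}$(1)$. Hence $p_1 = \min(2K, p_1^1)$ is continuous on $(0,1)$, which is exactly the assertion.

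To reconcile this with the piecewise presentation in~\eqref{p1(k)N1} — and to locate where the active branch of the minimum switches — I would check that the two branches agree at the junction $k = k_0$, i.e. that $2K(k_0) = p_1^1(k_0)$. This reduces to evaluating $f_1$ from~\eqref{f1=0} at $p = 2K$. Using $\sn 2K = 0$, $\cn 2K = -1$, $\dn 2K = 1$, and $\E(2K) = 2E(k)$, one obtains
$$
f_1(2K, k) = \sn 2K \, \dn 2K - (2\E(2K) - 2K)\cn 2K = 4E(k) - 2K(k) = 2\bigl(2E(k) - K(k)\bigr).
$$
Since $k_0$ is by definition the unique root of $2E(k) - K(k) = 0$, this shows that $p = 2K(k_0)$ is a root of $f_1(\cdot, k_0) = 0$. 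Together with the vertical tangent of the curve $p_1^1$ at $(k_0, 2K(k_0))$ recorded in Fig.~\ref{fig:p11}, this identifies $2K(k_0)$ as the first positive root in $(K, 3K)$, so $p_1^1(k_0) = 2K(k_0)$ and the two branches meet continuously.

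There is no serious obstacle here, since the substantive analytic input — the continuity of $p_1^1$ — has already been secured in Lemma~\ref{lem:p11}. The only delicate point is the behaviour at the switching value $k_0$: one must confirm not merely that $2K(k_0)$ solves $f_1 = 0$ but that it is the \emph{first} positive root on $(K, 3K)$, so that the two arguments of the minimum genuinely coincide there rather than crossing transversally. Granting the min-of-continuous-functions argument, however, continuity on all of $(0,1)$ follows at once without even appealing to this junction computation, the latter serving only to justify the explicit case split displayed in~\eqref{p1(k)N1}.
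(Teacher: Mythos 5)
Your proof is correct, and it rests on exactly the same essential input as the paper's, namely the continuity of $p_1^1$ on $(0,1)$ from Lemma~\ref{lem:p11}$(1)$; only the elementary wrapper around that input differs. The paper reads \eq{p1(k)N1} through its case split and glues the two branches: $p_1 = 2K$ is continuous on $(0,k_0]$, $p_1 = p_1^1$ is continuous on $[k_0,1)$, and the two closed pieces overlap at $k_0$, so the pasted function is continuous. You instead read the definition through the outer minimum and invoke the continuity of a pointwise minimum of two continuous functions, which makes the behaviour at the junction $k_0$ irrelevant to the continuity claim itself --- a marginally cleaner route that buys you independence from the consistency of the case split. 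Your supplementary computation $f_1(2K,k) = 2\bigl(2E(k)-K(k)\bigr)$ is correct, and combined with Lemma~\ref{lem:f1><0} (which shows $f_1(\cdot,k)$ is positive before and negative after $p_1^1$, hence has $p_1^1$ as its unique zero in $(K,3K)$) it does establish $p_1^1(k_0) = 2K(k_0)$, i.e.\ that the two readings of \eq{p1(k)N1} agree; the paper omits this check in the corollary, taking the consistency of the case split as already established in \cite{el_max}. Either argument is complete.
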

\begin{proof}
For $k \in [k_0, 1) $, the function $p_1(k) = p_1^1(k)$ is continuous 
by Lemma~\ref{lem:p11}. And for $k \in (0, k_0]$, the function $p_1(k) = 2 K(k)$ is continuous as well. 
\end{proof}

\begin{lemma}\label{lem:ampi2}
Consider sequences  $k^n \in (0, 1)$,  $k^n \to 1 - 0$ and $p^n \in (0, K(k^n))$, $p^n \to + \infty$. Then $\am(p^n, k^n) \to \pi/2$  as $n \to \infty$.
\end{lemma}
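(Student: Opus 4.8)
The plan is to combine the strict monotonicity of the amplitude $\am(\cdot,k)$ in its first argument with a squeeze argument, the essential observation being that the two limiting processes $p^n\to+\infty$ and $k^n\to 1$ can be decoupled. Recall that $\am(\cdot,k)$ is the inverse of the incomplete elliptic integral of the first kind, so that
$$
p = \int_0^{\am(p,k)} \frac{dt}{\sqrt{1 - k^2 \sin^2 t}};
$$
in particular $\am(\cdot,k)$ is strictly increasing in $p$, and $\am(K(k),k)=\pi/2$. Since $p^n\in(0,K(k^n))$, applying $\am(\cdot,k^n)$ to the inequality $p^n<K(k^n)$ gives at once the upper bound $\am(p^n,k^n)<\pi/2$ for every $n$. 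Thus the whole task reduces to producing a matching lower bound that tends to $\pi/2$.

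For the lower bound I would fix $\eps\in(0,\pi/2)$, set $\phi_0=\pi/2-\eps$, and estimate the partial integral up to $\phi_0$. The point is that the integrand becomes singular only at $t=\pi/2$, so cutting off strictly below $\pi/2$ yields a bound uniform in $k$: for every $k\in(0,1)$ and $t\in[0,\phi_0]$ one has $1-k^2\sin^2 t \ge 1-\sin^2\phi_0 = \cos^2\phi_0>0$, whence
$$
\int_0^{\phi_0} \frac{dt}{\sqrt{1-k^2\sin^2 t}} \le \frac{\phi_0}{\cos\phi_0} =: M_{\eps},
$$
a constant independent of $k$. Because $p^n\to+\infty$, there is an $N$ with $p^n>M_{\eps}$ for all $n\ge N$; for such $n$ we have $p^n>\int_0^{\phi_0} dt/\sqrt{1-(k^n)^2\sin^2 t}$, and applying the increasing map $\am(\cdot,k^n)$ to both sides gives $\am(p^n,k^n)>\phi_0=\pi/2-\eps$. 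Combined with the upper bound this yields $\pi/2-\eps<\am(p^n,k^n)<\pi/2$ for $n\ge N$, and since $\eps$ is arbitrary, $\am(p^n,k^n)\to\pi/2$.

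I expect the only delicate step to be the uniform estimate $M_{\eps}$: one must notice that truncating the defining integral strictly below the singular angle $\pi/2$ removes the $k\to1$ blow-up and thereby separates the influence of $k^n\to1$ from that of $p^n\to+\infty$. Everything else is monotonicity of the amplitude and a two-sided squeeze, so no further analytic input (such as the Gudermannian limit $\am(p,1)=\operatorname{gd}(p)$) is actually needed.
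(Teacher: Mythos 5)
Your proof is correct. The paper argues by contradiction on subsequences: it takes any convergent subsequence of $u^n=\am(p^n,k^n)\in(0,\pi/2)$ with limit $\bar u$, and notes that if $\bar u<\pi/2$ then $p^{n_m}=F(u^{n_m},k^{n_m})\to F(\bar u,1)<+\infty$, contradicting $p^n\to+\infty$. Your direct squeeze rests on exactly the same key fact --- that $F(u,k)$ stays finite uniformly in $k$ as long as $u$ is bounded away from the singular angle $\pi/2$ --- but you make it quantitative via the explicit bound $F(\phi_0,k)\le\phi_0/\cos\phi_0$, which lets you avoid both the compactness/subsequence step and any appeal to joint continuity of $F$ at $(\bar u,1)$. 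The trade-off is purely stylistic: the paper's version is shorter, yours is more self-contained and elementary; both are complete.
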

Here and below $\am(p,k)$ is Jacobi's amplitude~\cite{whit_watson}.
\begin{proof}
On any converging subsequence of the sequence $u^n = \am(p^n, k^n) \in (0, \pi/2)$ we have $u^{n_m} \to \bu \in [0, \pi/2]$. If $\bu < \pi/2$, then $p^{n_m} = F(u^{n_m}, k^{n_m}) \to F(\bu, 1) < + \infty$, a contradiction.
\end{proof}

Define the function $u_1^1(k) = \am(p_1^1(k),k)$.
By definition~\eq{f1=0} and Propos.~11.6~\cite{el_max},
the function  $u = u_1^1(k)$  is the first positive root of the equation
$$
f_u(u,k) = \sin u \sqrt{1 - k^2 \sin^2 u} - \cos u (2 E(u,k) - F(u,k)),
$$
moreover,
\begin{align*}
&k \in (0, k_0) \then u_1^1 \in (3\pi/2, \pi), \\
&k = k_0 \then u_1^1 = \pi, \\
&k \in (k_0, 1) \then u_1^1 \in (\pi, \pi/2).
\end{align*}

\begin{lemma}\label{lem:u11}
The function $u_1^1(k)$ satisfies the following properties:
\begin{itemize}
\item[$(1)$]
$u_1^1(k)$ decreases as $k \in (k_0, 1)$,
\item[$(2)$]
$\lim_{k \to 1 - 0} u_1^1(k) = \pi/2$.
\end{itemize}
\end{lemma}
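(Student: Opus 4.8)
The plan is to prove part (1) by implicit differentiation of the defining equation $f_u(u_1^1(k),k)=0$, and then to obtain part (2) from the resulting monotonicity together with a contradiction argument based on the divergence of the incomplete integral $F(u,k)$ as $k\to 1-0$.

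For part (1), by Lemma~\ref{lem:p11} the function $u_1^1(k)=\am(p_1^1(k),k)$ is smooth on $(k_0,1)$, so I may differentiate implicitly to get
$$
\frac{d u_1^1}{dk} = -\frac{\partial f_u/\partial k}{\partial f_u/\partial u}\bigg|_{u=u_1^1}.
$$
The denominator is negative: by Lemma~\ref{lem:f1><0} the function $f_1$, and hence $f_u$ (since $u=\am(p,k)$ is increasing in $p$), changes sign from $+$ to $-$ at the root, while the derivative there is nonzero (this nonvanishing is precisely what makes the implicit function theorem applicable in Lemma~\ref{lem:p11}), so $\partial f_u/\partial u<0$ at $u_1^1$. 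For the numerator I would substitute the standard modulus-derivative formulas
$$
\frac{\partial E}{\partial k}=\frac{E-F}{k}, \qquad \frac{\partial F}{\partial k}=\frac{1}{k(1-k^2)}\left(E-(1-k^2)F-\frac{k^2\sin u\cos u}{\sqrt{1-k^2\sin^2 u}}\right),
$$
and then eliminate $F$ using the root relation $\sin u\sqrt{1-k^2\sin^2 u}=\cos u\,(2E-F)$, valid at $u=u_1^1$. After cancellation (using $\sin^2u+\cos^2u=1$ and $k^2\sin^2u+(1-k^2\sin^2u)=1$) this collapses to the clean expression
$$
\frac{\partial f_u}{\partial k}\bigg|_{u=u_1^1}=\frac{\cos u\,(F-E)}{k(1-k^2)}.
$$
Since $u_1^1\in(\pi/2,\pi)$ for $k\in(k_0,1)$ we have $\cos u_1^1<0$, while $F>E>0$; hence this is negative. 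With both partial derivatives negative, $d u_1^1/dk<0$, which proves (1).

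For part (2), monotonicity from (1) and the inclusion $u_1^1\in(\pi/2,\pi)$ ensure that $\bar u=\lim_{k\to 1-0}u_1^1(k)$ exists with $\bar u\in[\pi/2,\pi)$. Assume $\bar u>\pi/2$. Since $u_1^1(k)\geq\bar u$ and $F$ is increasing in its first argument, $F(u_1^1,k)\geq F(\bar u,k)$, and by monotone convergence $F(\bar u,k)\to\int_0^{\bar u}dt/|\cos t|=+\infty$ as $k\to 1-0$ (the integral diverges because $\bar u>\pi/2$); meanwhile $E(u_1^1,k)\leq\pi$ and $\sin u_1^1\sqrt{1-k^2\sin^2u_1^1}\leq 1$ stay bounded, and $\cos u_1^1\leq\cos\bar u<0$. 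Then the identity $f_u(u_1^1,k)=0$, i.e.\ $\sin u_1^1\sqrt{1-k^2\sin^2u_1^1}=\cos u_1^1(2E-F)$, forces the right-hand side to tend to $+\infty$, contradicting boundedness of the left-hand side. Hence $\bar u=\pi/2$.

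The main obstacle is the algebraic simplification of $\partial f_u/\partial k$ at the root: it relies on the somewhat delicate modulus-derivative formula for $F$ and on a careful use of the defining relation to reach the transparent form $\cos u\,(F-E)/(k(1-k^2))$. Once that identity is established, both signs are immediate and the remainder of the argument is routine.
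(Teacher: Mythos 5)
Your proposal is correct and follows essentially the same route as the paper: implicit differentiation of $f_u(u,k)=0$ with a sign analysis of the two partial derivatives for part (1), and the same contradiction via the divergence of $F(u,k)$ as $k\to 1-0$ for part (2); your expression $\cos u\,(F-E)/(k(1-k^2))$ for $\partial f_u/\partial k$ at the root is equivalent, after one use of the root relation, to the paper's $-\bigl(\sqrt{1-k^2\sin^2 u}\,\sin u-\cos u\,F(u,k)\bigr)/(2k(1-k^2))$. The only soft spot is your appeal to Lemma~\ref{lem:p11} for the nonvanishing of $\partial f_u/\partial u$, which is mildly circular since that lemma's proof does not verify the implicit-function-theorem hypothesis; the paper instead computes directly that at the root $\partial f_u/\partial u=\sin^2 u\,\sqrt{1-k^2\sin^2 u}/\cos u<0$ for $u\in(\pi/2,\pi)$, and you could substitute this one-line computation for that step.
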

\begin{proof}
(1) 
For $k \in (k_0, 1)$, $u \in (\pi/2,\pi)$ we have
\begin{align*}
&\restr{\pder{f_u}{u}}{f_u= 0} = \sqrt{1 - k^2 \sin^2 u} \ \ \frac{\sin^2 u}{\cos u} < 0, \\
&\restr{\pder{f_u}{k}}{f_u= 0} = - \frac{ \sqrt{1 - k^2 \sin^2 u} \ \sin u - \cos u \ F(u,k)}{2 k (1-k^2)} < 0,
\end{align*}
thus 
$$
\der{u_1^1}{k} = - \frac{\partial f_u / \partial k }{\partial f_u / \partial u } < 0.
$$

(2)
The function $u_1^1(k)$ is decreasing for $k \in (k_0,1)$, thus there exists a limit $\lim_{k \to 1 - 0} u_1^1(k) = \bu \in [\pi/2, \pi)$. Assume by contradiction that $\bu > \pi/2$. Then for any $\eps>0$ the domain
$\{(u,k) \in \R^2 \mid u \in (\bu, \bu + \eps), \ k \in (1 - \eps, 1)\}$ contains points such that $f_u(u,k) = 0$. On the other hand, we have:
\begin{align*}
&\lim_{(u,k) \to (\bu, 1-0)} F(u,k) \geq \lim_{k \to 1-0} F(\pi/2,k) = + \infty,\\
&\lim_{(u,k) \to (\bu, 1-0)} f_u(u,k) = - \infty,
\end{align*}
a contradiction.
\end{proof}

 A plot of the function $u_1^1(k)$ is given in Fig.~\ref{fig:u11k}. Notice the vertical tangents at the points $(u,k) = (\pi,k_0)$ and $(u,k) = (\pi/2,1)$.
 
Denote by $\oR$ the completed real line $[-\infty, + \infty] = \{-\infty\} \cup \R \cup \{+\infty\}$, with the  basis of topology consisting of intervals $(a, b)$ and completed rays $[-\infty, b)$, $(a, + \infty]$ for $a, b \in \R$. In the following lemmas we consider a continuous function from a topological space  to the topological space $\oR$.   

\begin{lemma}
\label{lem:tt}
The function $\map{\tt}{N_1 \cup N_2 \cup N_3}{\oR}$ given by~\eq{tt}--\eq{ttN3} is continuous. 
\end{lemma}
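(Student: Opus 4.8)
The function $\tt$ is defined piecewise by~\eq{ttN1}--\eq{ttN3}, and is patently smooth (hence continuous) on the interior of each stratum $N_1$, $N_2$, $N_3$, since there it is an elementary expression in $\sqrt r$, $k$, and the continuous function $p_1(k)$ (continuous by Corollary~\ref{cor:p1}). So the real content is continuity at the boundaries between strata: the separatrix $N_3$ (where $k=1$) separates the oscillating regime $N_1$ from the rotating regime $N_2$. My plan is therefore to check matching of $\tt$ across $N_3$ by taking sequences $\lam^n \to \lam \in N_3$ and showing $\tt(\lam^n) \to \tt(\lam) = +\infty$, using the appropriate coordinates on each side.

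First I would set up the approach in the coordinates $(p,\tau,k)$ (equivalently $(\f,k,r)$) recalled in Section~\ref{sec:known}. A point of $N_3$ has $k=1$ and $\tt = +\infty$ by~\eq{ttN3}, so continuity there means: whenever $\lam^n \to \lam \in N_3$ with $\lam^n \in N_1 \cup N_2 \cup N_3$, we must have $\tt(\lam^n) \to +\infty$ in the topology of $\oR$. I would split this into the two cases according to whether $\lam^n$ approaches $N_3$ from $N_1$ or from $N_2$. In either case $k^n \to 1-0$, and $r^n \to r > 0$ (since $N_3 \subset \{r \neq 0\}$), so the prefactors $\tfrac{2}{\sqrt{r^n}}$ and $\tfrac{2k^n}{\sqrt{r^n}}$ stay bounded away from $0$ and $\infty$; hence $\tt(\lam^n) \to +\infty$ is equivalent to $p_1(k^n) \to +\infty$.

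The key estimate is then $p_1(k) \to +\infty$ as $k \to 1-0$, which I would establish from the definition~\eq{p1(k)N1}: for $k$ near $1$ we are in the branch $p_1(k) = p_1^1(k)$, and by Lemma~\ref{lem:u11}(2) together with Lemma~\ref{lem:ampi2} the amplitude $u_1^1(k) = \am(p_1^1(k),k) \to \pi/2$ forces $p_1^1(k) = F(u_1^1(k),k) \to +\infty$ (since $F(u,k) \to +\infty$ as $k \to 1$ for $u$ near $\pi/2$, exactly as in the contradiction argument of Lemma~\ref{lem:ampi2}). For the $N_2$ side, $p_1(k) = K(k) \to +\infty$ as $k \to 1-0$ directly. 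Thus both one-sided limits of $\tt$ at $N_3$ are $+\infty$, matching the value on $N_3$, and combined with smoothness in the interiors this gives continuity on all of $N_1 \cup N_2 \cup N_3$.

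I expect the main obstacle to be the continuity at $N_3$ from the $N_1$ side near the distinguished value $k_0$, where $p_1(k)$ switches branches between $2K(k)$ and $p_1^1(k)$ in~\eq{p1(k)N1}; but this is handled precisely by Corollary~\ref{cor:p1}, which guarantees $p_1$ is globally continuous across $k_0$, so no extra work is needed there. The only genuinely delicate point is showing $p_1^1(k) \to +\infty$ through the amplitude $u_1^1(k) \to \pi/2$ rather than through $K(k)$, and that is exactly what Lemmas~\ref{lem:ampi2} and~\ref{lem:u11} were set up to supply.
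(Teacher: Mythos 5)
Your proposal is correct and follows essentially the same route as the paper: continuity on each stratum separately (using Corollary~\ref{cor:p1} for the branch switch at $k_0$), then a sequential check that $\tt(\lam^n)\to+\infty$ as $\lam^n\to\lam\in N_3$ from either side, reduced to $p_1(k^n)\to+\infty$ since $r^n\to r>0$. The only difference is that for the $N_1$ side the paper gets $p_1(k^n)\to+\infty$ directly from the containment $p_1(k)\in(K(k),2K(k)]$ (so $p_1(k)>K(k)\to+\infty$), whereas you detour through $u_1^1(k)\to\pi/2$ and $F(u_1^1(k),k)\ge F(\pi/2,k)=K(k)$ --- a valid but slightly longer path to the same bound.
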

\begin{proof}
Notice first that the set
$$
N_1 \cup N_2 \cup N_3 = \{ \lam \in N \mid r > 0, \ (\b,c) \neq (0,0), \, (\pi,0)\}
$$
is open.

If $\lam \in N_1$, then the function $\tt(\lam) = 2 p_1(k)/\sqrt{r}$ is continuous by Cor.~\ref{cor:p1}.

If $\lam \in N_2$, then the function $\tt(\lam) = 2 k K(k)/\sqrt{r}$ is continuous as well.

Let $\lam = (\f, k, r) \in N_3$, $k = 1$, and let $\lam_n \to \lam$ as $n \to \infty$. We show that $\tt(\lam_n) \to \tt(\lam) = + \infty$. 

1) Let $\lam_n = (\f_n, k_n, r_n) \in N_1$ for all $n \in \N$. Then $k_n \to 1$, thus $K(k_n) \to + \infty$, so $(K(k_n), 2 K(k_n)) \ni p_1(k_n) \to + \infty$; moreover, $r_n \to r$ as $n \to \infty$. Consequently, $\tt(\lam_n) = 2 p_1(k_n)/\sqrt{r_n} \to + \infty$.

2) Let $\lam_n = (\f_n, k_n, r_n) \in N_2$ for all $n \in \N$,  then similarly  $\tt(\lam_n) = 2 k_n K(k_n)/\sqrt{r_n} \to + \infty$.

3) Let $\lam_n \in N_3$ for all $n \in \N$,  then    $\tt(\lam_n) = + \infty \to + \infty$.

Thus for any sequence $\lam_n \in N_1 \cup N_2 \cup N_3$ with $\lam_n \to \lam \in N_3$ we have $\tt(\lam_n) \to + \infty$, so the function $\tt$ is continuous on $N_3$.
\end{proof}

Define the following subset in the preimage of the exponential mapping:
\begin{align*}
&K_1 = \{ \lam = (\b, c, r)  \in N \mid \b \in (0, \pi), \ c > 0, \ r > 0, \ \tt(\lam) > 1 \}.
\end{align*}

\begin{lemma}\label{lem:K1}
The set $K_1$ is open.
\end{lemma}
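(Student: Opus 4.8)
The plan is to exhibit $K_1$ as an intersection of two open subsets of $N$, so that openness follows from general topology together with the continuity result already available.

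First I would rewrite the conditions defining $K_1$ other than $\tt(\lambda)>1$ in terms of the linear fiber coordinates $(h_1,h_2,h_3)$ on $N=T^*_{q_0}M\cong\R^3$. Since $h_1=-r\cos\b$, $h_2=c$, $h_3=-r\sin\b$, the requirement $\b\in(0,\pi)$ with $r>0$ is equivalent to $h_3=-r\sin\b<0$ (which already forces $r>0$), and $c>0$ is equivalent to $h_2>0$. Hence
$$
\{\lambda\in N\mid \b\in(0,\pi),\ c>0,\ r>0\}=\{\lambda\in N\mid h_2>0,\ h_3<0\},
$$
which is manifestly open in $N$, being cut out by two strict inequalities in linear coordinates.

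Next I would check that this set is contained in $N_1\cup N_2\cup N_3$, the domain on which $\tt$ is known to be continuous. For $\b\in(0,\pi)$ and $r>0$ we have $\cos\b<1$, whence $E=\tfrac{c^2}{2}-r\cos\b>-r$, which rules out $N_4$; the condition $\b\neq\pi$ excludes $N_5$, and $r\neq0$ excludes $N_6$ and $N_7$. Therefore every such $\lambda$ lies in $N_1\cup N_2\cup N_3$. By Lemma~\ref{lem:tt} the map $\map{\tt}{N_1\cup N_2\cup N_3}{\oR}$ is continuous, and (as recorded in the proof of that lemma) $N_1\cup N_2\cup N_3$ is open in $N$. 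Since the completed ray $(1,+\infty]$ is open in $\oR$, its preimage $\{\lambda\in N_1\cup N_2\cup N_3\mid \tt(\lambda)>1\}$ is open in $N_1\cup N_2\cup N_3$, hence open in $N$. Finally, $K_1$ is the intersection of this open set with the open set $\{h_2>0,\ h_3<0\}$, and is therefore open.

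The only step needing genuine care — the would-be main obstacle — is the verification that the sign conditions $\b\in(0,\pi)$, $c>0$, $r>0$ confine $\lambda$ to $N_1\cup N_2\cup N_3$; this is exactly what licenses the use of Lemma~\ref{lem:tt}. Once that inclusion is established, the openness of $K_1$ is immediate from the continuity of $\tt$ and the chosen topology on $\oR$, with no estimates required.
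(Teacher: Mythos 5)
Your proof is correct and follows essentially the same route as the paper, which simply observes that $K_1\subset N_1\cup N_2\cup N_3$ is cut out by strict inequalities of continuous functions (invoking Lemma~\ref{lem:tt} for the continuity of $\tt$). You merely make explicit the details the paper leaves implicit: the translation of the angular conditions into the open condition $h_2>0$, $h_3<0$, and the verification that these conditions place $\lambda$ in the open set $N_1\cup N_2\cup N_3$ where $\tt$ is continuous.
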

\begin{proof}
The set $K_1 \subset N_1 \cup N_2 \cup N_3$ is determined by a system of strict inequalities for continuous functions, thus it is open (the function $\tt(\lam)$ is continuous by Lemma~\ref{lem:tt}).
\end{proof}

\subsection{Properties of decomposition in preimage \\of exponential mapping}
In this subsection we prove some topological properties of decomposition~\eq{tNLi}. 

\begin{lemma}\label{lem:L1}
The set $L_1$ is open.
\end{lemma}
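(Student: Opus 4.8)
The goal is to show that the set $L_1$ is open in the preimage $N$ of the exponential mapping. Recall from \eqref{L1decomp}--\eqref{L1N3} that $L_1$ decomposes as $L_1 = (L_1 \cap N_1) \sqcup (L_1 \cap N_2^+) \sqcup (L_1 \cap N_3^+)$, where the three pieces live in $N_1$, $N_2^+$, and $N_3^+$ respectively. The subtlety is that $N_1$, $N_2$, and $N_3$ are glued together along $N_3$ (where $k = 1$) inside the ambient set $N_1 \cup N_2 \cup N_3$, so openness cannot be checked piece by piece in isolation: one must verify that $L_1$ is a neighborhood of each of its points in the ambient topology, including the limit points lying in $N_3^+$.

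The plan is as follows. First I would observe that $L_1 \subset K_1$, where $K_1 = \{\lam \in N \mid \b \in (0,\pi), \ c > 0, \ r > 0, \ \tt(\lam) > 1\}$ is open by Lemma~\ref{lem:K1}. The conditions $\b \in (0,\pi)$ and $c > 0$ translate, in the coordinate $\tau$, into the condition $\ts > 0$ and $\tc$ of a definite sign; in fact on $L_1$ one has $\tau \in (0,K)$, where $\sn$ and $\cn$ are both positive. Thus $K_1$ already captures the constraints $\tt(\lam) > 1$, $r > 0$, and the sign conditions on the pendulum variables. It remains to intersect with the open condition that pins down $L_1$ inside $K_1$, namely the condition that $\tau$ lies in the first quarter-period $(0,K)$, equivalently that $\ts > 0$ and $\tc > 0$ simultaneously.

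Concretely, I would show $L_1 = K_1 \cap \{\ts > 0, \ \tc > 0\}$. The inclusion $\subset$ is immediate from Table~\ref{tab:Li}: in each of $N_1$, $N_2^+$, $N_3^+$ the parameter range $\tau \in (0,K)$ forces $\sn\tau > 0$ and $\cn\tau > 0$, while the constraints $c > 0$, $\b \in (0,\pi)$, $r > 0$, $\tt > 1$ hold by definition of $L_1 \subset \tN$. For the reverse inclusion, a point of $K_1$ satisfying $\ts > 0$ and $\tc > 0$ has $r > 0$, $\tt(\lam) > 1$ (hence $\tc\ts \neq 0$, so it lies in $\tN$ and not in $N'$), and its $\tau$ is confined to $(0,K)$ modulo the period; the sign data together with $\b \in (0,\pi)$, $c>0$ select exactly the sheet $\tau \in (0,K)$ across all three strata $N_1$, $N_2^+$, $N_3^+$. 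Since $\ts$ and $\tc$ are continuous functions of $\lam$ on $N_1 \cup N_2 \cup N_3$ (being built from Jacobi elliptic functions of the coordinates, which vary continuously across the common boundary $k = 1$), the set $\{\ts > 0, \ \tc > 0\}$ is open, and $L_1$ is the intersection of two open sets.

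The main obstacle I anticipate is the matching across the strata boundary $k = 1$, i.e.\ verifying that the elliptic-coordinate expressions for $\tau$, and hence for $\sn\tau$, $\cn\tau$, and $\tt$, join continuously as one passes from $N_1$ (where $\tau$ is taken mod $4K$) and $N_2^+$ (mod $2K$) into $N_3^+$ (where $k=1$ and $\tau \in \R$). Lemma~\ref{lem:tt} already guarantees continuity of $\tt$ there, which handles the constraint defining $K_1$; the remaining care is to confirm that the sign conditions $\ts > 0$, $\tc > 0$ are expressed by genuinely continuous functions on the glued domain and that they correctly isolate the $L_1$ sheet rather than bleeding into an adjacent $L_i$. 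Once the identity $L_1 = K_1 \cap \{\ts > 0, \ \tc > 0\}$ is established, openness follows at once since $K_1$ is open and the sign set is cut out by strict inequalities on continuous functions.
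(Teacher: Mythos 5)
There is a genuine gap: the central identity $L_1 = K_1 \cap \{\ts > 0,\ \tc > 0\}$ is false, and even the inclusion $L_1 \subset K_1$ fails. The set $K_1$ is cut out by conditions on the coordinates $(\b, c, r)$ of the initial covector $\lam$, i.e.\ on the pendulum phase at time $t = 0$; in the rectifying coordinates this phase is $\f$, equivalently $\sqrt r\,\f = \tau - p$ (in $N_1$, where $\tau = \sqrt r(\f + 1/2)$ and $p = \sqrt r/2$). So $\b \in (0,\pi)$, $c > 0$ translate into sign conditions on $\sn(\tau - p)$ and $\cn(\tau - p)$, \emph{not} on $\ts$ and $\tc$: the whole point of the coordinate $\tau$ is that it records the pendulum phase at the midpoint $t = 1/2$ of the time interval, not at $t = 0$. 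Consequently $K_1 \cap N_1$ corresponds to $\tau \in (p, K+p)$ while $L_1 \cap N_1$ corresponds to $\tau \in (0, K)$; a point of $L_1$ with $\tau \in (0,p)$ lies outside $K_1$, and your claimed intersection is a strictly smaller set. The correct relation, which the paper uses, is $L_1 = e^{-1/2\, \vP}(K_1)$, where $\vP$ is the pendulum vector field: in the rectified coordinates the flow $e^{t\vP}$ is the shift $\f \mapsto \f + t$, it preserves each $N_i$, and it is a diffeomorphism of $N$, so openness of $K_1$ (Lemma~\ref{lem:K1}) transports to $L_1$.

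A secondary, unresolved point in your write-up is the continuity of $\ts$ and $\tc$ as functions on the glued domain $N_1 \cup N_2 \cup N_3$. You correctly identify this as the main obstacle but do not address it; since $\tau$ is defined by different formulas in $N_1$ and $N_2$ and the modulus $k$ degenerates at $N_3$, this is not automatic. The paper's route avoids the issue entirely: the only continuity input needed is that of $\tt$ (Lemma~\ref{lem:tt}), since $K_1$ is otherwise described by strict inequalities in the globally smooth coordinates $(\b, c, r)$, and the passage from $K_1$ to $L_1$ is by a diffeomorphism rather than by a further intersection.
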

\begin{proof}
Consider the vector field $\vP = c \pder{}{\b} - r \sin \b \pder{}{c} \in \Vec(N)$ determined by the equation of pendulum~\eq{pend}. We show that 
\be{L1K1}
L_1 = e^{-1/2 \vP} (K_1),
\ee
where $\map{e^{-1/2 \vP}}{N}{N}$ is the flow of the vector field $\vP$ for the time $-1/2$.

Since energy~\eq{E_r} is an integral of pendulum, then  $\vP E  = 0$, thus $e^{t \vP}(N_i) = N_i$, $i = 1, 2, 3$. Further, the coordinates $(\f, p, k)$ rectify the flow of the vector field $\vP$ (see~\eq{rectify}), so in these coordinates $\vP = \pder{}{\f}$.
Since
\begin{align*}
&L_1 \cap N_1 = \{ \lam \in N_1 \mid \f \in (-1/2, K/\sqrt r - 1/2), \ p \in (0, p_1(k)), \ k \in (0,1)\}, \\
&K_1 \cap N_1 = \{ \lam \in N_1 \mid \f \in (0, K/\sqrt r), \ p \in (0, p_1(k)), \ k \in (0,1)\}, 
\end{align*}
it is obvious that $L_1 \cap N_1 = e^{-1/2 \vP} (K_1 \cap N_1)$.

Similarly it follows that $L_1 \cap N_i = e^{-1/2 \vP} (K_1 \cap N_i)$
 for $i = 2, 3$.
  
Then equality~\eq{L1K1} follows. Since the set $K_1$ is open and the flow $\map{e^{-1/2 \vP}}{N}{N}$ is a diffeomorphism, then the set $L_1$ is open as well.
\end{proof}

\begin{lemma}\label{lem:L1con}
The set $L_1$ is arcwise connected.
\end{lemma}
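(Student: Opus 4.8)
The plan is to use the representation of $L_1$ from the previous lemma to transfer the question to the set $K_1$, and then to connect every point of $K_1$ to the separatrix $N_3^+$, where $\tt$ is infinite. First I would invoke Lemma~\ref{lem:L1}: $L_1 = e^{-1/2 \vP}(K_1)$, and the flow $e^{-1/2\vP}$ is a diffeomorphism of $N$, hence a homeomorphism. Since the continuous image of a path is a path, arcwise connectedness is preserved under homeomorphisms, so it suffices to prove that $K_1$ is arcwise connected. I will show that each point of $K_1$ is joined inside $K_1$ to the locus $N_3^+$, and that $N_3^+ \cap \{\b \in (0,\pi)\}$ is itself arcwise connected; this gives the claim.

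The main device is the dilation symmetry~\eq{group1}, which on the pendulum variables acts by $g_s(\b,c,r) = (\b,\, e^{-s}c,\, e^{-2s}r)$. Since $E = c^2/2 - r\cos\b \mapsto e^{-2s}E$ and $r \mapsto e^{-2s}r$, the modulus $k$ (a function of $E/r$) is invariant, while $\sqrt r \mapsto e^{-s}\sqrt r$; hence each branch of \eq{ttN1}--\eq{ttN3} yields $\tt(g_s(\lam)) = e^{s}\,\tt(\lam)$. For $s \geq 0$ the map $g_s$ preserves $\b \in (0,\pi)$, $c > 0$, $r > 0$ and multiplies $\tt$ by $e^{s} \geq 1$, so $g_s(K_1)\subseteq K_1$. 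Thus $s \mapsto g_s(\lam)$, $s \in [0,S]$, is a path in $K_1$ that lets me shrink $r$ to any prescribed value while staying in $K_1$.

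Next I would reach the separatrix. Put $m = \inf_{k\in(0,1)} p_1(k)$. Because $p_1$ is continuous (Corollary~\ref{cor:p1}) with $p_1(k)\to\pi$ as $k\to 0^+$ and $p_1(k)\to +\infty$ as $k\to 1^-$, we have $m>0$. Given $\lam\in K_1$, first apply a dilation to reach $g_S(\lam)=(\b',c',r')$ with $r' < 4m^2$. If $g_S(\lam)\in N_1$, then at this level every oscillating point satisfies $\tt = \frac{2}{\sqrt{r'}}p_1(k) \geq \frac{2m}{\sqrt{r'}} > 1$, so the whole $N_1$-slice at $r'$ lies in $K_1$; increasing $c$ at fixed $\b'$ raises $E$ up to $r'$ and lands on $N_3^+$ without leaving $K_1$. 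If $g_S(\lam)\in N_2$, then $\tt = \frac{2}{\sqrt{r'}}kK(k)$ is monotone in $E$ (since $kK(k)$ increases in $k$ and $k$ decreases in $E$), so decreasing $c$ at fixed $\b'$ toward the separatrix value only increases $\tt$ and keeps us in $K_1$ until $N_3^+$ is reached. If $g_S(\lam)\in N_3$ we are already on the separatrix. In every case $\lam$ is joined in $K_1$ to a point of $N_3^+$.

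Finally, on $N_3^+$ one has $\tt=+\infty$, so $N_3^+\cap\{\b\in(0,\pi)\} = \{(\b,\sqrt{2r(1+\cos\b)},r)\mid \b\in(0,\pi),\ r>0\}$ lies in $K_1$ and, being a graph over the connected set $(0,\pi)\times(0,+\infty)$, is arcwise connected. Hence any two points of $K_1$ are joined through $N_3^+$, so $K_1$, and therefore $L_1$, is arcwise connected. I expect the delicate point to be exactly Paragraph three: guaranteeing that the connecting paths never leave $\{\tt>1\}$. The only branch where $\tt$ need not be monotone along the natural path to the separatrix is the oscillating region $N_1$ (which would otherwise require monotonicity of $p_1(k)$, a fact not at hand); the dilation circumvents this by first shrinking $r$ below $4m^2$, after which the entire oscillating slice is admissible and no monotonicity of $p_1$ is needed.
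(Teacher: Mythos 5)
Your proof is correct, but it takes a genuinely different route from the paper's. The paper works directly in the rectifying coordinates $(\tau,p,k)$: from~\eq{L1N1}--\eq{L1N3} each piece $L_1\cap N_i$, $i=1,2,3$, is the region under the graphs of continuous positive functions over an interval, hence obviously arcwise connected, and the three pieces are then glued by noting that any point of $L_1\cap N_3$ (the $k=1$ locus) can be joined to nearby points of $L_1\cap N_1$ and $L_1\cap N_2$ --- a two-line argument that leaves the gluing at $k=1$ at the level of ``obvious''. You instead pull the problem back to $K_1$ via Lemma~\ref{lem:L1} and work in the pendulum coordinates $(\b,c,r)$, using the dilation $(\b,c,r)\mapsto(\b,e^{-s}c,e^{-2s}r)$ (which fixes $k$ and multiplies $\tt$ by $e^{s}$) to shrink $r$ below $4m^2$ with $m=\inf p_1>0$, after which explicit monotone paths in $c$ carry every point onto the separatrix $N_3^+$ without leaving $\{\tt>1\}$; the separatrix piece is a graph over $(0,\pi)\times(0,+\infty)$ and serves as the connecting hub. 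Both arguments ultimately route all paths through $N_3$, but yours is constructive and makes the delicate point --- that the connecting arcs never violate $\tt>1$ --- fully explicit, at the cost of invoking the scaling symmetry and the positivity of $\inf p_1$, neither of which the paper needs for this lemma.
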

\begin{proof}
It is obvious from equalities~\eq{L1N1}--\eq{L1N3} that the sets $L_1 \cap N_i$, $i = 1, 2, 3$, are arcwise connected. Since any point in $L_1 \cap N_3$ can be connected with some close points in $L_1 \cap N_1$ and $L_1 \cap N_2$ by a continuous curve, then the set $L_1$ is arcwise connected.
\end{proof}

In Sec.~9~\cite{el_max} were defined discrete symmetries of the elastic problem --- reflections $\eps^1$, $\eps^2$, $\eps^3$ that act both in preimage and image of the exponential mapping, and commute with it.

\begin{lemma}\label{lem:epsinsym}
\begin{itemize}
\item[$(1)$]
The mappings $\map{\eps^i}{N}{N}$, $i = 1, 2, 3$, are diffeomorphisms.
\item[$(2)$]
The reflections $\eps^i$ permute the sets $L_j$ as shown by Table~\ref{tab:epsiLj}.
\end{itemize}
\end{lemma}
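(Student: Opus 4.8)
The plan is to deduce both assertions from the explicit description of the reflections $\eps^i$ given in Sec.~9~\cite{el_max}, where they arise from the symmetries of pendulum~\eq{pend}. In the phase cylinder of $(\b,c)$ the three underlying affine involutions are $R_1:(\b,c)\mapsto(\b,-c)$, $R_2:(\b,c)\mapsto(-\b,c)$, $R_3:(\b,c)\mapsto(-\b,-c)$, all leaving the integral $r$ fixed, and $\eps^i$ is obtained by conjugating $R_i$ with the Hamiltonian flow about the midpoint $t=1/2$ of the extremal (so that $\eps^i$ commutes with $\Exp$). For part $(1)$ I would argue as follows: the pendulum field $\vP$ generates a real-analytic flow on $N$, and each $R_i$ is a real-analytic (indeed linear) involution, so the conjugate $\eps^i$ is real-analytic on all of $N$. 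Moreover $\{\Id,\eps^1,\eps^2,\eps^3\}$ is the Klein four-group of Sec.~9~\cite{el_max}, whence $(\eps^i)^2=\Id$. Since each $\eps^i$ is smooth and satisfies $(\eps^i)^2=\Id$, it is a bijection whose inverse is $\eps^i$ itself and therefore again smooth, i.e.\ a diffeomorphism $\map{\eps^i}{N}{N}$. This settles $(1)$.

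For part $(2)$ I would exploit that, by the definitions preceding Table~\ref{tab:Li}, the coordinate $\tau$ is the suitably rescaled pendulum phase at the midpoint $t=1/2$ (in $N_1$ one has $\tau=\sqrt r(\f+\tfrac12)$ and $\dot\f=1$, so $\tau$ records the phase at $t=1/2$), while $r$ and the modulus $k$ are invariants of $R_i$. Consequently the midpoint reflections act on $\tau$ by the three nontrivial transformations $\tau\mapsto-\tau$, $\tau\mapsto 2K-\tau$, $\tau\mapsto 2K+\tau$ (one for each $\eps^i$), accompanied on the sign-labelled strata by the interchange $N_2^+\leftrightarrow N_2^-$, $N_3^+\leftrightarrow N_3^-$ according to whether the reflection reverses $c=\sgn$-data. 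Because $r$ and $k$ are preserved, the $p$-ranges and the value $\tt(\lam)=2p_1(k)/\sqrt r$ on $N_1$ (resp.\ $2kK(k)/\sqrt r$ on $N_2$) are preserved, so the inequality $\tt(\lam)>1$ defining $\tN$ is invariant; likewise the locus $\tc\ts=0$, i.e.\ $\tau\in\{0,K,2K,3K\}\pmod{4K}$, is mapped to itself, so its complement and hence $\tN$ are $\eps^i$-invariant. It then remains to substitute the defining data of each $L_j$ from Table~\ref{tab:Li} into these formulas, read off the image interval of $\tau$ together with the resulting component, and compare with Table~\ref{tab:Li}; this identifies $\eps^i(L_j)$ with the $L_{\sigma_i(j)}$ asserted in Table~\ref{tab:epsiLj}.

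The main obstacle is the bookkeeping in this last substitution: one must verify the permutation simultaneously on the three families $N_1$, $N_2^{\pm}$, $N_3^{\pm}$, keeping track of the different moduli ($\tau$ is taken $\pmod{4K}$ on $N_1$ but $\pmod{2K}$ on $N_2$) and of how each $R_i$ flips or preserves the sign label encoding $\sgn c$. Once the action of each $\eps^i$ on the pair (interval of $\tau$, component index) is written out, the verification is a finite case check with no analytic content; the compatibility needed to pass between the three families across $k=1$ is supplied by the continuity of $\tt$ from Lemma~\ref{lem:tt} and by the real-analyticity established in part $(1)$.
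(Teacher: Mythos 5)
Your proposal is correct and follows essentially the same route as the paper: part $(1)$ from smoothness of the pendulum flow plus $(\eps^i)^2=\Id$, and part $(2)$ by reading off the action of $\eps^i$ on the coordinates $(\tau,p,k)$ (with $p$, $k$ preserved and $\tau$ sent to $2K-\tau$, $-\tau$, or $2K+\tau$, together with the sign flips on $N_2^{\pm}$, $N_3^{\pm}$) and checking against Table~\ref{tab:Li}. The only caveat is that the transformation of $\tau$ for a fixed $\eps^i$ is not literally one of your three formulas uniformly across $N_1$ and $N_2\cup N_3$ (e.g.\ $\eps^1$ acts by $2K-\tau$ on $N_1$ but by $-\tau$ on $N_2\cup N_3$), but since you explicitly flag the differing moduli and carry out the check family by family, this is only a matter of phrasing, not a gap.
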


\begin{table}[htbp]
$$
\begin{array}{|c|c|c|c|c|}
\hline
L_j & L_1 & L_2 & L_3 & L_4  \\
\hline
\eps^1(L_j) & L_2 & L_1 & L_4   & L_3   \\
\hline
\eps^2(L_j) & L_4 & L_3 & L_2 & L_1   \\
\hline
\eps^3(L_j)   & L_3 & L_4 & L_1 & L_2  \\
\hline
 \end{array}
$$ 
\caption{Action of $\eps^i$ on $L_j$}\label{tab:epsiLj}
\end{table}

\begin{proof}
(1) By the definition given in Subsec.~9.7~\cite{el_max}, we have $\mapto{\eps^1}{(\b, c, r)}{(\b_1, - c_1, r)}$, where $e^{\vP}(\b, c, r) = (\b_1, c_1, r)$. Since $e^{\vP}$ is smooth, then $\eps^1$ is smooth as well. Moreover, we have $\eps^1 \circ \eps^1 = \Id$, thus $\eps^1$ is a diffeomorphism. Similarly, $\eps^2$ and $\eps^3$ are diffeomorphisms.

(2) The reflection $\eps^1$ preserves the coordinates $k$, $p$ and acts as follows on the coordinate $\tau$ of a point $\lam = (p, \tau, k) \in N_1\cup N_2 \cup N_3$:
\begin{align*}
&\lam \in N_1 \then \mapto{\eps^1}{\tau}{2 K - \tau}, \\
&\lam \in N_2 \cup N_3 \then \mapto{\eps^1}{\tau}{- \tau}.
\end{align*}
Thus $\eps^1(L_1 \cap N_i) = L_2 \cap N_i$, $i = 1, 2, 3$. So $\eps(L_1) = L_2$.

Similarly one proves the remaining entries of Table~\ref{tab:epsiLj}.
\end{proof}

\begin{proposition}\label{propos:Li}
The sets $L_i$, $i = 1, \dots, 4$, are open and arcwise connected.
\end{proposition}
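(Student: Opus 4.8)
The plan is to bootstrap from the two facts already established for the single domain $L_1$---that it is open (Lemma~\ref{lem:L1}) and arcwise connected (Lemma~\ref{lem:L1con})---to the remaining three domains by using the reflections $\eps^i$ as homeomorphisms. Both openness and arcwise connectedness are preserved under homeomorphisms, and Lemma~\ref{lem:epsinsym}$(1)$ guarantees that each $\map{\eps^i}{N}{N}$ is a diffeomorphism. Hence it suffices to exhibit each $L_j$, $j = 2, 3, 4$, as the image of $L_1$ under an appropriate reflection.

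First I would read off the action of the reflections from Table~\ref{tab:epsiLj}. The entries in the column $L_1$ give $\eps^1(L_1) = L_2$, $\eps^2(L_1) = L_4$, and $\eps^3(L_1) = L_3$, that is,
$$
L_2 = \eps^1(L_1), \qquad L_3 = \eps^3(L_1), \qquad L_4 = \eps^2(L_1).
$$
Thus every $L_j$ for $j = 2, 3, 4$ is the diffeomorphic image of $L_1$.

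Then I would conclude by the standard topological facts: applying a diffeomorphism (in particular, an open mapping) to the open set $L_1$ yields an open set, and applying a homeomorphism to the arcwise connected set $L_1$ yields an arcwise connected set. Therefore $L_2$, $L_3$, $L_4$ are open and arcwise connected, which together with Lemmas~\ref{lem:L1} and~\ref{lem:L1con} for $L_1$ establishes the proposition.

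The main obstacle is essentially absent at this stage, since all the substantive work has been carried out in the preceding lemmas: the rectification identity $L_1 = e^{-1/2 \vP}(K_1)$ in Lemma~\ref{lem:L1}, the gluing argument across $N_1$, $N_2$, $N_3$ in Lemma~\ref{lem:L1con}, and the explicit computation of the action of the reflections on the coordinate $\tau$ underlying Lemma~\ref{lem:epsinsym}. The only point requiring care is the bookkeeping of which reflection sends $L_1$ to which $L_j$, which is read directly from the $L_1$ column of Table~\ref{tab:epsiLj}.
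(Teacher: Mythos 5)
Your proposal is correct and matches the paper's own argument, which simply states that the proposition follows from Lemmas~\ref{lem:L1}--\ref{lem:epsinsym}; you have filled in exactly the intended details, correctly reading $L_2 = \eps^1(L_1)$, $L_3 = \eps^3(L_1)$, $L_4 = \eps^2(L_1)$ from the $L_1$ column of Table~\ref{tab:epsiLj} and invoking invariance of openness and arcwise connectedness under diffeomorphisms.
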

\begin{proof}
Follows from Lemmas~\ref{lem:L1}--\ref{lem:epsinsym}.
\end{proof}

\subsection{Decomposition in image of exponential mapping}

Recall that the time 1 attainable set of system~\eq{sys1}--\eq{sys3} is
$$
\CA =
\{ (x,y,\t) \in M 
\mid
x^2 + y^2 < 1
\text{ or }
(x,y,\t) = (1, 0, 0) \}.
$$
Consider the following decomposition of this set:
\begin{align}
&\CA = \tM \sqcup M', \label{AMtilde}\\
&\tM = \{ q \in \CA   \mid  P(q)  \sin (\t/2) \neq 0\},  \nonumber\\
&M' = \{ q \in \CA   \mid P(q) \sin(\t/2) = 0\},  \nonumber\\
&M_{\pm} = \{ q \in M \mid \t \in (0, 2 \pi), \ x^2 + y^2 < 1, \ \sgn P(q) = \pm 1 \}, \nonumber\\
&\tM = M_+ \sqcup M_-. \label{tM+-}
\end{align}
The function $P(q) = x \sin(\t/2) - y \cos(\t/2)$ was introduced in~\cite{el_max}, it is defined on $M$ up to sign. If $\t \in (0, 2 \pi)$ as in $M_{\pm}$, then the function $P(q)$ is well-defined.

Decomposition~\eq{tM+-} is   shown in Fig.~\ref{fig:decompM}.

\onefiglabel{torus+moebius_t}{Decomposition in $M$}{fig:decompM} 

\begin{lemma}\label{lem:M+-}
The sets $M_+$ and $M_-$ are open, arcwise connected, and simply connected.
\end{lemma}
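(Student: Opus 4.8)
The plan is to introduce polar coordinates in the disk together with a single $\t$-dependent rotation of the angular variable that untwists the region, thereby exhibiting each of $M_\pm$ as an open box in $\R^3$, from which openness, arcwise connectedness, and simple connectedness all follow simultaneously. First I would dispose of openness directly: on the set $\{\t \in (0,2\pi)\}$ the function $P(q) = x\sin(\t/2) - y\cos(\t/2)$ is single-valued and continuous, and the conditions $\t \in (0,2\pi)$, $x^2+y^2 < 1$ are open in $M$. Hence $M_+ = \{\t \in (0,2\pi)\} \cap \{x^2+y^2<1\} \cap \{P > 0\}$ is an intersection of open sets, and likewise for $M_-$.

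The key computation is to write $x = \r\cos\a$, $y = \r\sin\a$ with $\r \in [0,1)$, $\a \in S^1$, which gives $P = \r\sin(\t/2 - \a)$. On $M_\pm$ the sign condition forces $\r > 0$, so $(\r,\a)$ are honest punctured-disk polar coordinates there and $\sgn P = \sgn\sin(\t/2-\a)$. I would then set $\g = \a - \t/2 \in S^1$; for each fixed $\t \in (0,2\pi)$ the map $\a \mapsto \g$ is a rotation of $S^1$, so $(\r,\a,\t)\mapsto(\r,\g,\t)$ is a homeomorphism of the chart $(0,1)\times S^1\times(0,2\pi)$ onto itself, with inverse $(\r,\g,\t)\mapsto(\r,\g+\t/2,\t)$. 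Since $P = -\r\sin\g$, in these coordinates one finds $M_+ = \{\r\in(0,1),\ \t\in(0,2\pi),\ \g\in(-\pi,0)\bmod 2\pi\}$ and $M_- = \{\r\in(0,1),\ \t\in(0,2\pi),\ \g\in(0,\pi)\bmod 2\pi\}$.

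The decisive observation is that in each case the admissible values of $\g$ fill a single open arc of length $\pi$ that does not wrap around $S^1$ and, crucially, does not depend on $\t$. Consequently the covering map $\R \to S^1$ restricts to a homeomorphism of this arc onto a fixed open interval of $\R$, and I obtain homeomorphisms $M_+ \cong (0,1)\times(-\pi,0)\times(0,2\pi)$ and $M_- \cong (0,1)\times(0,\pi)\times(0,2\pi)$. Each is a product of open intervals, hence contractible, so in particular $M_\pm$ are open, arcwise connected, and simply connected; the statement for $M_-$ can alternatively be read off from $M_+$ by the evident symmetry $P \mapsto -P$.

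The hard part will be justifying the final step rigorously: one must verify that the passage from the $S^1$-valued coordinate $\g$ to a genuine real interval is a homeomorphism, i.e. that the region really occupies one non-wrapping arc with $\t$-independent endpoints, so that no monodromy is introduced as $\t$ traverses $(0,2\pi)$. This is precisely where restricting to $\t \in (0,2\pi)$, rather than using the full circle $S^1_\t$, is essential: it prevents the rotating diameter $\{P=0\}$ from closing up and producing the M\"obius-type identification visible in Fig.~\ref{fig:decompM}. Once $\t$ is cut open the half-turn of the diameter is undone, the region untwists into a box, and contractibility is immediate.
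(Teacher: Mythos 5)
Your proof is correct. The paper itself offers no argument here --- its proof of Lemma~\ref{lem:M+-} is the single word ``Obvious'' --- so your untwisting construction supplies the substance that the authors chose to omit. The key steps all check out: $P = \r\sin(\t/2-\a)$, so $P\neq 0$ forces $\r>0$; the shear $(\r,\a,\t)\mapsto(\r,\a-\t/2,\t)$ is a self-homeomorphism of $(0,1)\times S^1\times(0,2\pi)$; and in the new coordinate $\g$ the sign condition $\sgn P=\pm 1$ becomes $\mp\sin\g>0$, cutting out a fixed, $\t$-independent open half-circle, so each of $M_\pm$ is homeomorphic to an open box in $\R^3$ and is therefore open, arcwise connected, and simply connected. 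Your closing remark correctly identifies why the restriction $\t\in(0,2\pi)$ matters: over the full circle $S^1_\t$ the locus $\{P=0\}$ is the M\"obius-type band of Fig.~\ref{fig:decompM}, and only after cutting $\t$ open does the half-turn of the diameter undo itself. The only simplification available is that openness and connectedness are immediate without the change of coordinates (continuity of $P$ on $\{\t\in(0,2\pi)\}$ and convexity of the $\g$-arc fibrewise), but since simple connectedness genuinely needs the global untwisting, your single homeomorphism handling all three properties at once is the cleaner route.
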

\begin{proof}
Obvious.
\end{proof}

\begin{lemma}\label{lem:epsiM+-}
The reflections $\eps^i$ permute the sets $M_{\pm}$ as shown by Table~\ref{tab:epsiM+-}.
\end{lemma}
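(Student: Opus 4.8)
The plan is to mirror the proof of Lemma~\ref{lem:epsinsym}(2): recall the explicit action of the reflections $\eps^i$ on the image $M$ (Sec.~9~\cite{el_max}) and track their effect on the data defining $M_{\pm}$, namely the sign of $P(q)$ and the angle $\t$.

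First I would recall the coordinate formulas for $\map{\eps^i}{M}{M}$, $i = 1, 2, 3$. These reflections fix $q_0 = 0$, so each acts as an isometry of the plane $\R^2_{x,y}$ (hence preserves $x^2 + y^2 < 1$) composed with an involution of the angle $\t$ that carries the interval $\t \in (0, 2\pi)$ bijectively onto itself. In particular each $\eps^i$ maps the open solid torus $\{ x^2 + y^2 < 1, \ \t \in (0, 2\pi)\}$ underlying $\tM = M_+ \sqcup M_-$ into itself, so $\eps^i(\tM) = \tM$; it then remains only to determine how the two sheets $M_+$ and $M_-$ are interchanged.

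Second, for each $i$ I would substitute the recalled formulas into $P(q) = x \sin(\t/2) - y \cos(\t/2)$ and express $P(\eps^i(q))$ as $\pm P(q)$. The key point is that on $M_{\pm}$ the angle is confined to $\t \in (0, 2\pi)$, which selects the branch $\t/2 \in (0, \pi)$ and makes $P$ well-defined (on all of $M$ it is defined only up to sign). When $\eps^i$ sends $\t \mapsto 2\pi - \t$ (or $\t \mapsto -\t$ reduced back into $(0,2\pi)$), the half-angle transforms by $\t/2 \mapsto \pi - \t/2$, whence $\sin(\t/2) \mapsto \sin(\t/2)$ and $\cos(\t/2) \mapsto - \cos(\t/2)$; combined with the transformation of $(x,y)$ this yields $P(\eps^i(q)) = \pm P(q)$ with an explicit sign. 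Reading off these signs gives the entries of Table~\ref{tab:epsiM+-}: $\eps^i(M_+) = M_+$ when the sign is $+$, and $\eps^i(M_+) = M_-$ when it is $-$, with the action on $M_-$ then forced since each $\eps^i$ is an involution by Lemma~\ref{lem:epsinsym}(1).

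The main obstacle will be the careful bookkeeping of the half-angle $\t/2$ and the resulting sign of $P$: since $P$ is only defined up to sign on $M$ and $\sin(\t/2), \cos(\t/2)$ are themselves double-valued on $S^1_{\t}$, one must consistently use the branch fixed by $\t \in (0, 2\pi)$ and check that the reflected angle, reduced back into $(0, 2\pi)$, receives the matching branch. Once this is settled, each entry follows by a direct substitution, exactly as the entries of Table~\ref{tab:epsiLj} followed from the coordinate action of $\eps^i$ in the preimage. As a consistency check, the permutation obtained must be compatible, through the intertwining relation $\Exp \circ \eps^i = \eps^i \circ \Exp$, with the permutation of the $L_j$ already recorded in Table~\ref{tab:epsiLj}.
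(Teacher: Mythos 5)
Your proposal matches the paper's proof essentially step for step: the paper likewise recalls the explicit coordinate formulas (9.10)--(9.12) for $\eps^i$ on $M$ with the branch $\t \in (0,2\pi)$ fixed, observes that they preserve the restrictions $\t \in (0,2\pi)$ and $x^2+y^2<1$, and reads off the sign rules $\eps^1, \eps^2 \colon P \mapsto -P$, $\eps^3 \colon P \mapsto P$ to fill in the table. The half-angle bookkeeping you flag is exactly the point the paper handles by its ``appropriate choice of the branch of $\t$,'' so your plan is correct and essentially identical in approach.
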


\begin{table}[htbp]
$$
\begin{array}{|c|c|c|}
\hline
M_{\pm}& M_+ & M_-  \\
\hline
\eps^1(M_{\pm}) & M_-  & M_+    \\
\hline
\eps^2(M_{\pm}) & M_-  & M_+   \\
\hline
\eps^3(M_{\pm})   & M_+  & M_-  \\
\hline
 \end{array}
$$ 
\caption{Action of $\eps^i$ on $M_{\pm}$}\label{tab:epsiM+-}
\end{table}

\begin{proof}
Action of reflections $\eps^i$ in $M$ is given by
formulas (9.10)--(9.12)~\cite{el_max}, with appropriate choice of  the branch of $\t \in (0, 2 \pi)$:
\begin{align}
&\mapto{\eps^1}{\vect{\t \\ x \\ y}}{\vect{2 \pi-\t \\ x \cos \t + y \sin \t \\ -x \sin \t + y \cos \t}}, \label{eps1txy} \\
&\mapto{\eps^2}{\vect{\t \\ x \\ y}}{\vect{\t \\ x \cos \t + y \sin \t \\ x \sin \t - y \cos \t}}, \label{eps2txy} \\
&\mapto{\eps^3}{\vect{\t \\ x \\ y}}{\vect{2 \pi-\t \\ x  \\ - y}}. \label{eps3txy}
\end{align}
These formulas show that the reflections $\eps^i$ preserve the restrictions $\t \in (0, 2 \pi)$ and $x^2 + y^2 < 1$, and
imply the following transformation rules for the function~$P$:
$$
\mapto{\eps^1}{P}{-P}, \qquad \mapto{\eps^2}{P}{-P}, \qquad \mapto{\eps^3}{P}{P}. 
$$
Then $\eps^1(M_{\pm}) = \eps^2(M_{\pm}) = M_{\mp}$ and $\eps^3(M_{\pm}) = M_{\pm}$,
which gives Table~\ref{tab:epsiM+-}.
\end{proof}

\begin{lemma}\label{lem:epsiMdiffeo}
The mappings $\map{\eps^i}{M}{M}$, $i = 1, 2, 3$, are diffeomorphisms.
\end{lemma}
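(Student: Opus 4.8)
The plan is to follow the argument of Lemma~\ref{lem:epsinsym}(1): an involutive smooth self-map is automatically a diffeomorphism, since it is its own smooth inverse. So for each $i = 1, 2, 3$ it suffices to check that $\map{\eps^i}{M}{M}$ is a well-defined smooth map and that $\eps^i \circ \eps^i = \Id$. First I would establish smoothness and well-definedness; then I would verify the involution identity; the diffeomorphism conclusion is then immediate.

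For well-definedness and smoothness, observe that the right-hand sides of \eq{eps1txy}--\eq{eps3txy} are built from $\cos \t$ and $\sin \t$ (smooth and $2\pi$-periodic, hence descending to $S^1_\t$), from the assignment $\t \mapsto 2\pi - \t$ (which is $\t \mapsto -\t \pmod{2\pi}$, a smooth self-map of $S^1_\t$), and from expressions linear in $(x,y) \in \R^2_{x,y}$. Hence each $\eps^i$ carries $M = \R^2_{x,y} \times S^1_\t$ into itself and is smooth; in particular the branch choice for $\t$ noted after \eq{eps3txy} is immaterial at the level of $M$.

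The one computation is the identity $\eps^i \circ \eps^i = \Id$. For $\eps^3$ it is immediate from \eq{eps3txy}: a double application gives $\t \mapsto 2\pi - (2\pi - \t) = \t$ and $y \mapsto -(-y) = y$, with $x$ fixed. For $\eps^1$ and $\eps^2$ the action on $(x,y)$ is by an orthogonal $2 \times 2$ matrix depending on $\t$, and I would check that the square of the map is the identity on the plane. For $\eps^2$ the angle is unchanged and the linear part
$$
\begin{pmatrix} \cos \t & \sin \t \\ \sin \t & - \cos \t \end{pmatrix}
$$
is a reflection, so it squares to $\Id$. For $\eps^1$ one substitutes $\cos(2\pi - \t) = \cos \t$ and $\sin(2\pi - \t) = - \sin \t$ into the second application and verifies, using $\cos^2 \t + \sin^2 \t = 1$, that the composite linear part reduces to the identity. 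I expect no real obstacle: the substance is this routine matrix check, and the only point requiring any care is the well-definedness of the $\t$-component on the circle factor $S^1_\t$. Once each $\eps^i$ is seen to be smooth with $(\eps^i)^{-1} = \eps^i$, it is a diffeomorphism, which proves the lemma.
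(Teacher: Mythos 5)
Your proposal is correct and follows exactly the paper's argument: the paper's proof likewise observes that the $\eps^i$ are smooth by the explicit formulas \eq{eps1txy}--\eq{eps3txy} and satisfy $\eps^i \circ \eps^i = \Id$, hence are diffeomorphisms. Your additional matrix verifications of the involution identities are correct and merely spell out what the paper leaves implicit.
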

\begin{proof}
The reflections $\eps^i$ are smooth by formulas (9.10)--(9.12)~\cite{el_max} and satisfy $\eps^i \circ \eps^i = \Id$.
\end{proof}

\begin{lemma}\label{lem:ExpLiM+-}
The action of the exponential mapping on the sets $L_i$   is shown by Table~\ref{tab:ExpLiM+-}.
\end{lemma}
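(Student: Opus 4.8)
The plan is to reduce everything to a single base case, the inclusion $\Exp(L_1) \subset M_+$, and then to obtain the inclusions for $L_2, L_3, L_4$ formally from the reflection symmetries. The advantage of this scheme is that the only genuine computation occurs for $L_1$; the remaining three lines of Table~\ref{tab:ExpLiM+-} become pure bookkeeping.

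For the base case I would proceed in two stages. First I would show that $\Exp(L_1) \subset \tM$, i.e. that $P(q)\sin(\t/2) \neq 0$ at every image point $q = \Exp(\lam)$, $\lam \in L_1$. This is where the explicit parameterisation of the exponential mapping from~\cite{el_max} enters: along an extremal one has closed-form expressions for $\sin(\t/2)$ and for $P(q) = x \sin(\t/2) - y \cos(\t/2)$ in terms of Jacobi's functions evaluated at the coordinates $(\tau, p, k)$. By construction the defining condition of $\tN$, hence of $L_1$, contains $\cn\tau \, \sn\tau \neq 0$, and the Maxwell-stratum analysis of~\cite{el_max} identifies the vanishing loci of $\sin(\t/2)$ and of $P$ with the strata $\sn\tau = 0$ and $\cn\tau = 0$. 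Thus on $L_1$ both factors are nonzero and $\Exp(L_1) \subset \tM = M_+ \sqcup M_-$, where the representative of $\t$ is taken in $(0, 2\pi)$ so that $P$ is well-defined.

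Second, I would invoke connectedness to pin down the component. Since $L_1$ is arcwise connected by Proposition~\ref{propos:Li} and $\Exp$ is continuous, $\Exp(L_1)$ is a connected subset of the disjoint union of the open sets $M_+$ and $M_-$ (Lemma~\ref{lem:M+-}), hence it lies entirely in one of them. To decide which, I evaluate $\sgn P$ at one convenient sample point of $L_1$, e.g. a point with $\tau \in (0, K)$ where $\cn\tau > 0$, and read off $\sgn P = +1$ from the closed-form expression, giving $\Exp(L_1) \subset M_+$. This connectedness route is cleaner than verifying $P > 0$ throughout $L_1$, since it only requires nonvanishing on $L_1$ together with the sign at a single point.

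Finally, the remaining inclusions follow from the fact that the reflections $\eps^i$ are diffeomorphisms of $M$ (Lemma~\ref{lem:epsiMdiffeo}) that commute with $\Exp$, so $\Exp(\eps^i(L_1)) = \eps^i(\Exp(L_1)) \subset \eps^i(M_+)$. Reading Table~\ref{tab:epsiLj} together with Table~\ref{tab:epsiM+-}: $\eps^1(L_1) = L_2$ with $\eps^1(M_+) = M_-$ yields $\Exp(L_2) \subset M_-$; $\eps^3(L_1) = L_3$ with $\eps^3(M_+) = M_+$ yields $\Exp(L_3) \subset M_+$; and $\eps^2(L_1) = L_4$ with $\eps^2(M_+) = M_-$ yields $\Exp(L_4) \subset M_-$, which is exactly Table~\ref{tab:ExpLiM+-}. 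The main obstacle is the base case, specifically extracting from~\cite{el_max} the correct closed forms for $\sin(\t/2)$ and $P$, confirming that their zero sets are precisely $\sn\tau = 0$ and $\cn\tau = 0$, and checking the sign $\sgn P = +1$ on $\tau \in (0, K)$; once this is secured the symmetry propagation is routine.
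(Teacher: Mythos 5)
Your overall architecture coincides with the paper's: both proofs reduce Table~\ref{tab:ExpLiM+-} to the single inclusion $\Exp(L_1)\subset M_+$ and then propagate it through the identity $\Exp\circ\eps^i=\eps^i\circ\Exp$ combined with Tables~\ref{tab:epsiLj} and~\ref{tab:epsiM+-}; your three symmetry lines are exactly the paper's. The difference is the base case. The paper checks the sign of every factor in the closed-form expressions \eq{sinth2N1}--\eq{PN2} and concludes $\sin(\t/2)>0$ and $P>0$ pointwise on $L_1\cap N_i$, $i=1,2,3$. You instead establish only nonvanishing of $P\sin(\t/2)$ on $L_1$ and then invoke arcwise connectedness of $L_1$ (Proposition~\ref{propos:Li}) together with the fact that $M_+$ and $M_-$ are disjoint open sets (Lemma~\ref{lem:M+-}) to reduce the sign determination to one sample point. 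That shortcut is legitimate, and all the ingredients it needs are already proved at that point in the paper.

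One step is understated, however. You assert that the vanishing loci of $\sin(\t/2)$ and $P$ are precisely the strata $\sn\tau=0$ and $\cn\tau=0$, so that the condition $\cn\tau\,\sn\tau\neq0$ in the definition of $\tN$ already yields $\Exp(L_1)\subset\tM$. The formulas say more: by \eq{sinth2N1} and \eq{PN1}, on $N_1$ the function $\sin(\t/2)$ also vanishes where $\sn p=0$, and $P$ also vanishes where $f_1(p)=0$; on $N_2\cup N_3$ the extra factors are $\sn p\cn p$ and $f_2(p)$. These additional zeros are excluded on $L_1$ only because the condition $\tt(\lam)>1$ forces $p\in(0,p_1(k))$, and one still needs Lemma~\ref{lem:f1><0} (and the analogous positivity of $f_2$ on $(0,K)$) to conclude that $f_1(p)\neq0$ there. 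Note also that your sample-point evaluation of $\sgn P$ itself requires $\sgn f_1(p)$, not $\sgn(\cn\tau)$, since $P$ on $N_1$ carries the factor $\sn\tau\,\dn\tau\,f_1(p)$. So the computation you hoped to confine to a single point is in fact also needed for the nonvanishing step; once Lemma~\ref{lem:f1><0} and its $f_2$-analogue are supplied, your argument closes and the symmetry propagation is routine, as you say.
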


\begin{table}[htbp]
$$
\begin{array}{|c|c|c|c|c|}
\hline
L_i & L_1 & L_2  & L_3 & L_4\\
\hline
\Exp(L_i) & M_+  & M_-  & M_+  & M_-   \\
\hline
 \end{array}
$$ 
\caption{Action of $\Exp$ on $L_i$}\label{tab:ExpLiM+-}
\end{table}

\begin{proof}
First we show that $\Exp(L_1) \subset M_+$.

Let $\lam \in N_1$. It follows from the parameterisation of extremal trajectories obtained in~\cite{el_max} that
\begin{align}
&\sin \frac{\t}{2} = \frac{2 k \ss \dd \tc}{\Delta}, \qquad \Delta = 1 - k^2 \ssp \tsp, \label{sinth2N1}\\
&P = \frac{4 k \ts \td f_1(p)}{\Delta}. \label{PN1}
\end{align}

Let $\lam \in L_1 \cap N_1$. Then $\tau \in (0, K)$, thus $\ts > 0$, $\td > 0$, $\tc > 0$.
Moreover, since $p \in (0, p_1(k))$, then $f_1(p) > 0$ (see Lemma~\ref{lem:f1><0}) and $\ss > 0$, $\dd > 0$. Thus $\sin \frac{\t}{2} > 0$, $P > 0$, so $\Exp(L_1 \cap N_1) \subset M_+$.

Let $\lam \in N_2$. Then
\begin{align}
&\sin \frac{\t}{2} = \frac{2 \cc \ss \td}{\Delta},  \label{sinth2N2}\\
&P = \frac{4 k \ts \tc f_2(p)}{\sqrt r \Delta}, \label{PN2}\\
&f_2(p) = (k^2 \ss \cc + \dd ((2-k^2)p - 2 \E(p)))/k. \nonumber
\end{align}
Similarly to the case $\lam \in L_1 \cap N_1$, these formulas imply that $\Exp(L_1 \cap N_2) \subset M_+$.

If $\lam \in N_3$, then formulas~\eq{sinth2N2}, \eq{PN2} remain valid with $k = 1$, and similarly to the case $\lam \in L_1 \cap N_2$ it follows that $\Exp(L_1 \cap N_3) \subset M_+$.

Thus $\Exp(L_1) \subset M_+$.

We have $\eps^i \circ \Exp = \Exp \circ \eps^i$ on $N$ (see Propos. 9.2~\cite{el_max}). Then by virtue of Lemmas~\ref{lem:epsinsym} and~\ref{lem:epsiM+-} we get
$$
\Exp(L_2) = \Exp \circ \eps^1(L_1) = \eps^1 \circ \Exp(L_1) \subset \eps^1(M_+) = M_-.
$$
Similarly it follows that $\Exp(L_3) \subset M_+$, $\Exp(L_4) \subset M_-$.
\end{proof}

\section{Diffeomorphic properties \\of exponential mapping}\label{sec:diffeo}
In this section we prove the main result of this work.

\begin{theorem}\label{th:Expdiffeo}
The following mappings are diffeomorphisms:
$$
\map{\Exp}{L_1}{M_+}, \quad \map{\Exp}{L_2}{M_+}, \quad 
\map{\Exp}{L_3}{M_+}, \quad \map{\Exp}{L_4}{M_-}.
$$
\end{theorem}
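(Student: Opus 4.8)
The plan is to prove each restriction is a global diffeomorphism by Hadamard's global diffeomorphism theorem: a proper local diffeomorphism from a connected manifold onto a connected, simply connected manifold of the same dimension is a diffeomorphism. Most ingredients are already in place. By Lemma~\ref{lem:ExpLiM+-} the maps land in the claimed codomains; by Proposition~\ref{propos:Li} each $L_i$ is open and connected; by Lemma~\ref{lem:M+-} each $M_{\pm}$ is open, connected, and simply connected; and since $N = T^*_{q_0}M$ and $M$ are both three-dimensional, domain and codomain agree in dimension. Moreover, I would first reduce the four claims to one by symmetry: the reflections $\eps^i$ are diffeomorphisms of $N$ and of $M$ (Lemmas~\ref{lem:epsinsym}, \ref{lem:epsiMdiffeo}), commute with $\Exp$, and permute the $L_j$ and the $M_{\pm}$ as in Tables~\ref{tab:epsiLj} and~\ref{tab:epsiM+-}. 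Hence, once $\map{\Exp}{L_1}{M_+}$ is a diffeomorphism, the involutive identity $\eps^i \circ \Exp \circ \eps^i = \Exp$ yields $\Exp|_{L_j} = \eps^i \circ (\Exp|_{L_1}) \circ \eps^i|_{L_j}$ as a composition of diffeomorphisms, for the reflection $\eps^i$ carrying $L_j$ to $L_1$; this gives the remaining three cases with exactly the codomains of Lemma~\ref{lem:ExpLiM+-}. So it suffices to treat $\map{\Exp}{L_1}{M_+}$.

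Next I would show $\Exp$ is a local diffeomorphism on $L_1$, i.e.\ that $d_\lambda \Exp$ is nondegenerate for every $\lambda \in L_1$. This is precisely the absence of conjugate points at time $1$. Since $L_1 \subset \tN$ we have $\tt(\lambda) > 1$; invoking the bound on the first conjugate time from~\cite{el_conj} (the first conjugate time is not smaller than the first Maxwell time $\tt(\lambda)$), the point $\lambda$ is non-conjugate along $[0,1]$, so $d_\lambda \Exp$ is an isomorphism and $\Exp|_{L_1}$ is a local diffeomorphism onto an open subset of $M_+$.

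The principal obstacle is properness of $\map{\Exp}{L_1}{M_+}$: if $\lambda_n \in L_1$ and $\Exp(\lambda_n) \to q \in M_+$, then $(\lambda_n)$ must subconverge to a point of $L_1$. I would argue by contradiction, examining each way a sequence can leave the open set $L_1$ --- namely $r \to 0$ or $r \to +\infty$, $k \to 0$ or $k \to 1$, $\tau$ reaching an endpoint where $\ts$, $\tc$, or $\td$ degenerates, or $p \to p_1(k)$ where $f_1(p) \to 0$ --- and showing, via the explicit Jacobi-function formulas \eq{sinth2N1}--\eq{PN1} on $N_1$ and \eq{sinth2N2}--\eq{PN2} on $N_2 \cup N_3$, that in each such case the image tends to the boundary $M'$ of $M_+$ (where $\sin(\t/2) = 0$ or $P = 0$) or to $\partial \CA$ (where $x^2 + y^2 \to 1$). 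For instance, $p \to p_1$ forces $f_1(p) \to 0$ and hence $P \to 0$, while an endpoint value of $\tau$ forces $\ts\,\tc \to 0$ and hence $\sin(\t/2) \to 0$ or $P \to 0$; each outcome contradicts $\Exp(\lambda_n) \to q \in M_+$. The delicate part is handling these limits uniformly across the three strata $N_1, N_2, N_3$ and the passages between them, for which I would lean on the continuity results of Lemmas~\ref{lem:tt}, \ref{lem:ampi2} and Corollary~\ref{cor:p1}.

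Finally, being a proper local diffeomorphism, $\Exp|_{L_1}$ is a covering map onto its image, which is open and closed in the connected set $M_+$ and therefore equals $M_+$. A covering of the simply connected $M_+$ by the connected $L_1$ has a single sheet, so $\map{\Exp}{L_1}{M_+}$ is bijective; being also a local diffeomorphism, it is a diffeomorphism. The symmetry reduction of the first paragraph then delivers the three remaining diffeomorphisms and completes the proof.
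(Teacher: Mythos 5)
Your proposal is correct and follows essentially the same route as the paper: reduce to $\Exp\colon L_1 \to M_+$ via the reflections $\eps^i$, establish nondegeneracy from the conjugate-time bound $\tconj \geq \tt > 1$, prove properness by a case analysis on degenerating sequences using the Jacobi-function formulas, and conclude by Hadamard's global inverse function theorem (your covering-map finish is just the standard proof of that theorem). The only discrepancy is that your codomain for $L_2$ is $M_-$, matching Lemma~\ref{lem:ExpLiM+-} and the rest of the paper, whereas the theorem as printed says $M_+$ --- an apparent typo in the statement, not a flaw in your argument.
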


By virtue of Lemmas~\ref{lem:epsinsym}, \ref{lem:epsiMdiffeo}, \ref{lem:epsiM+-}, it is enough to prove the following statement.

\begin{proposition}\label{propos:ExpdiffeoL1M+}
The mapping $\map{\Exp}{L_1}{M_+}$ is a  diffeomorphism.
\end{proposition}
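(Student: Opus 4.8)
The plan is to deduce the statement from Hadamard's global diffeomorphism theorem: a proper local diffeomorphism from a connected manifold onto a connected and simply connected manifold of the same dimension is a diffeomorphism. Both $L_1 \subset N$ and $M_+ \subset M$ are open subsets of $3$-dimensional manifolds, hence $3$-dimensional manifolds themselves; $L_1$ is arcwise connected and $M_+$ is connected and simply connected (Propos.~\ref{propos:Li} and Lemma~\ref{lem:M+-}); and by Lemma~\ref{lem:ExpLiM+-} the map $\Exp$ sends $L_1$ into $M_+$. So the work reduces to verifying two facts: that $\Exp|_{L_1}$ is a local diffeomorphism, and that it is proper.

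\textbf{Local diffeomorphism.} Since $\dim L_1 = \dim M_+ = 3$, it is enough to show that $d_\lambda \Exp$ is nondegenerate for every $\lambda \in L_1$. On $L_1$ one has $\tt(\lambda) > 1$, and invoking the lower bound on the first conjugate time established in~\cite{el_conj}, namely $\tconj(\lambda) \geq \tt(\lambda) > 1$, the extremal $\Exp_t(\lambda)$ carries no conjugate point for $t \in (0,1]$. Hence $t = 1$ is not a conjugate time and $d_\lambda \Exp = d_\lambda \Exp_1$ is nondegenerate, so $\Exp|_{L_1}$ is a local diffeomorphism.

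\textbf{Properness (the main obstacle).} Here I would show that $\Exp^{-1}(\mathcal K) \cap L_1$ is compact for every compact $\mathcal K \subset M_+$. Taking $\lambda_n \in L_1$ with $\Exp(\lambda_n) \in \mathcal K$, passing to the coordinates $(\tau, p, k)$ of Table~\ref{tab:Li} and compactifying the $(\tau,p)$-ranges by $\oR$, a subsequence converges to some $\lambda_* \in \overline{L_1}$; it suffices to prove $\lambda_* \in L_1$. I argue by contradiction: if $\lambda_*$ lay on $\partial L_1$, then one of the constraints defining $L_1$ would degenerate, and the explicit parameterisation~\eqref{sinth2N1}--\eqref{PN2} would force $\Exp(\lambda_*) \in \partial M_+$, contradicting $\Exp(\lambda_*) \in \mathcal K \subset M_+$. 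The cases to handle are: $\tau \to 0$ (then $\sn\tau \to 0$, so $P \to 0$ by~\eqref{PN1},~\eqref{PN2}); $\tau \to K$ (then $\cn\tau \to 0$, giving $\sin(\theta/2) = 0$ or $P = 0$ by~\eqref{sinth2N1},~\eqref{PN2}); the surface $\tt = 1$, i.e.\ $p \to p_1(k)$ in $N_1$ or $p \to K$ in $N_2$ (then $f_1(p) \to 0$ by Lemma~\ref{lem:f1><0}, resp.\ $\cn p \to 0$, whence $P \to 0$ resp.\ $\sin(\theta/2) \to 0$); $p \to 0$, i.e.\ $r \to 0$ (then the image tends to $(1,0,0)$ with $x^2 + y^2 = 1$); and $k \to 0$ (then the factor $k$ in~\eqref{sinth2N1} forces $\sin(\theta/2) \to 0$). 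The value $k \to 1$ is not a boundary, since the separatrix stratum $N_3^+$ with $k=1$ lies inside $L_1$ and glues its $N_1$- and $N_2^+$-parts together. In every case the limit lands on $\{\sin(\theta/2) = 0\} \cup \{P = 0\} \cup \{x^2+y^2 = 1\} = \partial M_+$, giving the contradiction and hence properness.

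Finally, $\Exp|_{L_1}$ being a proper local diffeomorphism onto the connected, simply connected $M_+$, Hadamard's theorem makes it a covering map; as $M_+$ is simply connected the covering is trivial, and as $L_1$ is connected it has a single sheet, so $\Exp|_{L_1}$ is bijective and therefore a diffeomorphism. I expect the properness step to be the principal difficulty: one must verify that the image genuinely escapes to $\partial M_+$ along \emph{every} approach to $\partial L_1$, which requires care at the passage to the separatrix $N_3$ (where the coordinate description changes) and in the non-compact directions $p,\tau \to +\infty$ that occur there.
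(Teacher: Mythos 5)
Your route is the same as the paper's: Hadamard's global inverse function theorem, nondegeneracy of $d_\lam\Exp$ from the conjugate--time bound $\tconj(\lam)\geq\tt(\lam)>1$, and properness by showing that degeneration of the coordinates $(\tau,p,k)$ pushes the image to the boundary of $M_+$. The finite boundary cases you enumerate ($\tau\to 0$, $\tau\to K$, $p\to p_1$ or $p\to K$, $p\to 0$, $k\to 0$) reproduce the paper's closedness argument and are handled correctly, as is the observation that $k\to1$ is not a degeneration because $N_3^+$ lies inside $L_1$.

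There is, however, a genuine gap in the properness step, precisely at the point you flag as the principal difficulty and then leave unresolved: the escape to infinity \emph{inside} $L_1$. Since $L_1$ contains $N_3^+$ and the $k\to1$ ends of $N_1$ and $N_2^+$, a sequence $\lam^n\in L_1$ with $\Exp(\lam^n)$ in a compact $\mathcal{K}\subset M_+$ may have $\tau^n\to+\infty$ and/or $p^n\to+\infty$ while $k^n\to1$; at such an idealised limit point $\Exp$ is not defined, so the scheme ``the limit lands on $\partial M_+$'' does not apply and one must show directly that $\Exp(\lam^n)$ eventually leaves $\mathcal{K}$. When only one of $\tau^n,p^n$ is unbounded this still follows from \eq{sinth2N1}: by Lemma~\ref{lem:ampi2} (and Lemma~\ref{lem:u11}) one has $\am(p^n,k^n)\to\pi/2$, hence $\dn(p^n,k^n)\to0$ while $\D=1-k^2\sn^2p\,\sn^2\tau$ stays bounded away from $0$, so $\sin(\t/2)\to0$. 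But when $\tau^n\to+\infty$ and $p^n\to+\infty$ simultaneously, $\D\to0$ as well, and both $\sin(\t/2)$ in \eq{sinth2N1} and $P$ in \eq{PN1} become indeterminate of type $0/0$ (in $P$ the factor $f_1(p)=\sn p\,\dn p-(2\E(p)-p)\cn p$ is itself indeterminate there). Your dichotomy ``$\sin(\t/2)\to0$ or $P\to0$'' therefore breaks down in exactly this case. The paper resolves it with a third coordinate function, $R=x^2+y^2-1$: its expression \eq{RN1} splits into a term tending to $0$ and a manifestly positive term, which contradicts $R\leq-\eps$ on $\mathcal{K}$. Without this (or an equivalent) argument for the doubly unbounded direction, properness --- and hence the proposition --- is not established.
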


We prove this statement by applying the following Hadamard's global inverse function theorem.

\begin{theorem}[Th. 6.2.8~\cite{inverse}]\label{th:global_diffeo}
Let $X$, $Y$ be smooth manifolds and let $\map{F}{X}{Y}$ be a smooth mapping such that:
\begin{enumerate}
\item
$\dim X = \dim Y$, 
\item
$X$ and  $Y$ are arcwise connected,
\item
$Y$ is simply connected,
\item
$F$ is nondegenerate,
\item
$F$ is proper (i.e., preimage of a compact is a compact).
\end{enumerate}
Then $F$ is a diffeomorphism.
\end{theorem}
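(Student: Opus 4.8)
The plan is to deduce the theorem from covering space theory, by recognizing $F$ first as a local diffeomorphism, then as a covering map, and finally as a global diffeomorphism. First I would invoke the inverse function theorem: since $\dim X = \dim Y$ and $F$ is nondegenerate, the differential $dF_x$ is a linear isomorphism at every $x \in X$, so $F$ is a local diffeomorphism, i.e.\ each point of $X$ has an open neighborhood carried diffeomorphically onto an open subset of $Y$. In particular $F$ is an open map.

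The core of the argument is to upgrade this local statement to a covering map, and this is where properness enters. Since $F$ is continuous and proper while $Y$ is locally compact Hausdorff (being a manifold), $F$ is a closed map; combined with openness and the connectedness of $Y$, this forces $F$ to be surjective, as its image is nonempty, open, and closed, hence all of $Y$. Now fix $y \in Y$. Its fiber $F^{-1}(y)$ is discrete, because $F$ is a local diffeomorphism, and compact, because $F$ is proper; hence $F^{-1}(y) = \{x_1, \dots, x_n\}$ is finite. Choosing pairwise disjoint open neighborhoods $U_i \ni x_i$ each mapped diffeomorphically onto an open set, I would set $C = X \setminus \bigcup_i U_i$. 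Then $F(C)$ is closed and omits $y$, so $V = \left(Y \setminus F(C)\right) \cap \bigcap_i F(U_i)$ is an open neighborhood of $y$; putting $W_i = U_i \cap F^{-1}(V)$, one checks that $F^{-1}(V) = \bigsqcup_{i=1}^n W_i$ with each restriction $\map{F|_{W_i}}{W_i}{V}$ a diffeomorphism. Thus $V$ is evenly covered, and $F$ is a covering map.

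It then remains to show that this covering has a single sheet. Since $X$ is arcwise connected and $Y$ is arcwise connected, locally arcwise connected, and simply connected, the standard correspondence between the number of sheets of a connected covering and the index $[\pi_1(Y) : F_*\pi_1(X)]$ applies; this index is $1$ because $\pi_1(Y)$ is trivial. Equivalently, the homotopy-lifting property lets me lift every loop in $Y$ (all of which are null-homotopic) to conclude that each fiber is a singleton. Hence $F$ is a bijective local diffeomorphism, its set-theoretic inverse is therefore smooth, and $F$ is a diffeomorphism.

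The main obstacle is the middle step: establishing that a proper local diffeomorphism is genuinely a covering map. This is precisely where properness must be exploited twice — to make the fibers finite, and, via the closedness of $F(C)$, to produce an evenly covered neighborhood of $y$ containing no stray preimages accumulating from outside the $U_i$. Once the covering structure is in hand, the connectivity and simple-connectivity hypotheses reduce the number of sheets to one by a routine application of the lifting theorems, so the final passage from covering map to diffeomorphism is essentially formal.
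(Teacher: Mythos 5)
This statement is not proved in the paper at all: it is Hadamard's global inverse function theorem, imported verbatim as Theorem~6.2.8 of the cited monograph of Krantz and Parks, and the authors use it as a black box to establish Proposition~\ref{propos:ExpdiffeoL1M+}. So there is no in-paper argument to compare yours against. That said, your proposal is the standard and correct proof of this result: the inverse function theorem makes $F$ a local diffeomorphism; properness into the locally compact Hausdorff manifold $Y$ makes $F$ closed, hence (with openness and connectedness of $Y$) surjective; each fiber is discrete and compact, hence finite; and your construction of $V = \left(Y \setminus F(C)\right) \cap \bigcap_i F(U_i)$ correctly produces an evenly covered neighborhood, so $F$ is a covering map. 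Since $Y$ is simply connected and locally path-connected and $X$ is connected, the covering has one sheet, and a bijective local diffeomorphism is a diffeomorphism. The only pedantic caveat is that surjectivity requires $X$ to be nonempty, which is implicit in calling it arcwise connected; otherwise the argument is complete and is essentially the proof one finds in the cited reference.
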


Now we check hypotheses 4 and 5 of Th.~\ref{th:global_diffeo} for the mapping $\map{\Exp}{L_1}{M_+}$.

\begin{proposition}\label{propos:ExpL1M+nondeg}
The mapping $\map{\Exp}{L_1}{M_+}$ is nondegenerate.
\end{proposition}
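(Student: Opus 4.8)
The plan is to reduce the nondegeneracy of $\Exp$ to the absence of conjugate points at the terminal instant $t=1$, and then to quote the lower bound for the first conjugate time obtained in~\cite{el_conj}.

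First I would recall that $\Exp = \Exp_1$ is a smooth mapping between the $3$-dimensional manifolds $L_1 \subset N = T^*_{q_0}M$ and $M_+ \subset M$, and that all extremals filling $L_1 \subset N_1 \cup N_2 \cup N_3$ are normal (Th.~5.3~\cite{el_max}). For the fixed-time problem~\eq{sys1}--\eq{sys6} with $t_1 = 1$, the Jacobi theory of the second variation for the normal Hamiltonian flow~\eq{dh1-h2h3} identifies the critical points of $\Exp_1$ with the conjugate points: the differential $d_\lambda \Exp_1$ is degenerate if and only if $t=1$ is a conjugate time along the extremal $t \mapsto \Exp_t(\lambda)$. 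Thus it suffices to prove that $t=1$ is not a conjugate time for any $\lambda \in L_1$, i.e. that $\tconj(\lambda) > 1$, where $\tconj$ denotes the first conjugate time.

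Next I would invoke the estimate from~\cite{el_conj} asserting that conjugate points never precede Maxwell points: for every $\lambda \in N_1 \cup N_2 \cup N_3$ one has $\tconj(\lambda) \geq \tt(\lambda)$. By construction $L_1 \subset \tN$ (see~\eq{tNLi}), and the defining condition of $\tN$ in~\eq{NhatNtilde} forces $\tt(\lambda) > 1$ for every $\lambda \in \tN$. Chaining the two inequalities gives $\tconj(\lambda) \geq \tt(\lambda) > 1$ for all $\lambda \in L_1$; since $\tconj$ is the \emph{first} conjugate time, the instant $t=1$ lies strictly before every conjugate time and is therefore not conjugate. Hence $d_\lambda \Exp$ is nondegenerate at every $\lambda \in L_1$.

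The main obstacle is the conjugate-time estimate $\tconj \geq \tt$ borrowed from~\cite{el_conj}; this is where the genuine analytic work resides, resting on the sign analysis of the Jacobian of the extremal flow through the function $f_1$ of~\eq{f1=0} and the comparison $p_1(k) = \min(2K(k), p_1^1(k))$ of~\eq{p1(k)N1}. A secondary point to check is that the reduction ``critical point of $\Exp_1$ $\Leftrightarrow$ conjugate time'' and the inequality $\tt > 1$ behave uniformly as $\lambda$ ranges over the three pieces $L_1 \cap N_1$, $L_1 \cap N_2$, $L_1 \cap N_3$ of $L_1$; here the continuity of $\tt$ across the strata (Lemma~\ref{lem:tt}) guarantees that no degeneracy is created in passing from the open part $N_1 \cup N_2$ to the boundary stratum $N_3$, on which $\tt = +\infty$ makes the conclusion immediate.
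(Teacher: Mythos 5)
Your proposal is correct and follows essentially the same route as the paper: quote the bound $\tconj(\lam) \geq \tt(\lam)$ from Theorem~5.1 of~\cite{el_conj}, observe that $\tt(\lam) > 1$ on $L_1$ by the definition of $\tN$, and conclude that $t=1$ is not a conjugate time, so the differential of $\Exp$ is nondegenerate. The extra remarks about uniformity across the strata $N_1$, $N_2$, $N_3$ are harmless but not needed, since the argument is pointwise in $\lam$.
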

\begin{proof}
Theorem~5.1~\cite{el_conj} gives the following lower bound on the first conjugate time $\tconj(\lam)$ along extremal trajectory $\Exp_t(\lam)$:
$$
\forall \lam \in N \qquad \tconj(\lam) \geq \tt(\lam).
$$
Let $\lam \in L_1$, then $\tt(\lam) > 1$, thus $\tconj(\lam) > 1$. This means that the differential $\map{\Exp_{*\lam}}{T_{\lam}N}{T_qM}$, $q = \Exp(\lam)$, is nondegenerate.
\end{proof}

\begin{proposition}\label{propos:ExpL1M+proper}
The mapping $\map{\Exp}{L_1}{M_+}$ is proper.
\end{proposition}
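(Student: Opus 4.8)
The plan is to verify properness through its sequential formulation: it suffices to show that whenever $\lam_n \in L_1$ is a sequence with $\Exp(\lam_n) = q_n \to q$ for some $q \in M_+$, the sequence $\lam_n$ possesses a subsequence converging to a point of $L_1$. I would work in the coordinates $(\tau, p, k)$ of Table~\ref{tab:Li}, in which $L_1$ sits inside the compact box $[0,+\infty] \times [0,+\infty] \times [0,1]$; passing to a subsequence I may assume $(\tau_n, p_n, k_n) \to \blam = (\btau, \bp, \bk)$ there. If $\blam \in L_1$ the claim is immediate, so the whole argument reduces to showing that $\blam \in \cl(L_1) \setminus L_1$ is incompatible with $q \in M_+$.

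The target side is controlled by two scalars. Since $q = (x,y,\t) \in M_+$ means $\t \in (0, 2\pi)$, hence $\sin(\t/2) \neq 0$, together with $x^2 + y^2 < 1$ and $P(q) > 0$, continuity of $\Exp$ forces $\sin(\t_n/2) \to \sin(\t/2) \neq 0$, $P_n \to P(q) > 0$, and $x_n^2 + y_n^2 \to x^2 + y^2 < 1$. Thus it is enough to prove that each boundary stratum $\blam \in \cl(L_1)\setminus L_1$ drives at least one of $\sin(\t_n/2) \to 0$, $P_n \to 0$, or $x_n^2 + y_n^2 \to 1$; any of these contradicts $q \in M_+$. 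The tools for this are the explicit formulas \eq{sinth2N1}, \eq{PN1} on $N_1$ and \eq{sinth2N2}, \eq{PN2} on $N_2 \cup N_3$.

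First I would dispose of the finite strata. If $\btau = 0$ then $\ts \to 0$, so $P_n \to 0$ by \eq{PN1}, \eq{PN2}. If $\btau = K$ with $\bk < 1$ then $\tc \to 0$, giving $\sin(\t_n/2) \to 0$ on $N_1$ via \eq{sinth2N1} and $P_n \to 0$ on $N_2$ via \eq{PN2}. If $\bp = 0$ then $\ss \to 0$ and $f_2 \to 0$, so \eq{sinth2N1} gives $\sin(\t_n/2) \to 0$ on $N_1$ and \eq{PN2} gives $P_n \to 0$ on $N_2$. On the Maxwell--type stratum $\bp = p_1(\bk)$, $\bk \in (0,1)$: when $p_1(\bk) = p_1^1(\bk)$ one has $f_1(p_n) \to 0$ by Lemma~\ref{lem:f1><0}, so $P_n \to 0$ by \eq{PN1}; when $p_1(\bk) = 2K(\bk)$ one has $\ss \to 0$, so $\sin(\t_n/2) \to 0$; the twin stratum $\bp = K(\bk)$ in $N_2$ gives $\cc \to 0$ and $\sin(\t_n/2) \to 0$ by \eq{sinth2N2}. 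Finally, if $\bk = 0$ then $p_n, \tau_n$ stay bounded (since $p_1(k_n) \le 2K(k_n) \to \pi$), and the explicit factor $k_n$ in \eq{sinth2N1} forces $\sin(\t_n/2) \to 0$ on $N_1$, while on $N_2$ a short expansion of $f_2$ in powers of $k_n$ yields $f_2 \to 0$, hence $P_n \to 0$.

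There remain the strata at infinity, which by the ranges in Table~\ref{tab:Li} force $\bk = 1$ (for $k$ bounded away from $1$ both $\tau$ and $p$ stay bounded, and $\blam \in L_1 \cap N_3 \subset L_1$ is not a boundary point). If exactly one of $p_n, \tau_n$ tends to $+\infty$, I would apply Lemma~\ref{lem:ampi2} --- supplemented, in the chart $N_1$ where $p$ may exceed $K$, by the bound $\am(p_n,k_n) < u_1^1(k_n) \to \pi/2$ from Lemma~\ref{lem:u11} together with the divergence argument of Lemma~\ref{lem:ampi2} --- to conclude $\am(p_n,k_n) \to \pi/2$ (respectively $\am(\tau_n,k_n) \to \pi/2$), whence $\cc, \dd \to 0$ (resp. $\tc,\td \to 0$) and therefore $\sin(\t_n/2) \to 0$. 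The hard part will be the case in which $p_n \to +\infty$ and $\tau_n \to +\infty$ simultaneously: there $\D \to 0$ as well, and I expect both $\sin(\t_n/2)$ and $P_n$ to stay bounded away from $0$ (indeed for $p_n \asymp \tau_n$ one computes finite positive limits), so neither scalar detects the escape. In this regime the only remaining possibility is $x_n^2 + y_n^2 \to 1$, and to close the proof I would substitute the asymptotics $\am(p_n,k_n), \am(\tau_n,k_n) \to \pi/2$ into the parameterisation of $x_n, y_n$ from~\cite{el_max} and verify directly that $x_n^2 + y_n^2 \to 1$, so that $q \in \partial\CA$ --- again contradicting $q \in M_+$. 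This last asymptotic estimate, uniform across the joint limit in the separatrix chart $N_3$, is the principal technical obstacle; the finite strata and the one-sided infinite strata are routine given the formulas \eq{sinth2N1}--\eq{PN2} and Lemmas~\ref{lem:f1><0}, \ref{lem:ampi2}, \ref{lem:u11}.
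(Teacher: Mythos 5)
Your proposal is correct and follows essentially the same route as the paper: the paper likewise reduces properness to showing that $S=\Exp^{-1}(K)$ is bounded and closed for a compact $K\subset M_+$, kills the finite boundary strata through the signs of $\sin(\t/2)$ and $P$ in \eq{sinth2N1}--\eq{PN2} together with Lemmas~\ref{lem:f1><0}, \ref{lem:ampi2}, \ref{lem:u11}, and detects the joint escape $p,\tau\to+\infty$ through $x^2+y^2\to 1$. The one step you defer as the principal obstacle is closed in the paper without any uniform asymptotics: by \eq{RN1} the quantity $R=x^2+y^2-1$ splits into the term $16\E(p)(\E(p)-p)/r\to 0$ plus a second term that is manifestly nonnegative on $L_1$ (since $f_1(p)>0$ there by Lemma~\ref{lem:f1><0}), so $\liminf R\geq 0$, contradicting $R\leq-\eps$ on $K$ --- positivity replaces the asymptotic computation you were worried about.
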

\begin{proof}
Let $K \subset M_+$ be a compact. Denote the function $R(q) = x^2 + y^2 - 1$. There exists $\eps> 0$ such that for any $q \in K$
\be{sinthPR}
\sin \frac{\t}{2} \geq \eps, \qquad P(q) \geq \eps, \qquad - \eps \geq R(q) \geq -1.
\ee
We prove that the preimage $S = \Exp^{-1}(K) \subset L_1$ is compact, i.e., bounded and closed.

By contradiction, suppose first that $S$ is unbounded, then it contains a sequence $\lam^n = (\tau^n, p^n, k^n) \to \infty$.

If $k^n$ is separated from 1, then the sequences $\tau^n \in (0, K(k^n))$ and $p^n \in (0, 2K(k^n))$ are bounded, thus $\lam^n$ is bounded, a contradiction. Thus $k^n \to 1$ on a subsequence of $\lam^n$ (we keep the notation $\lam^n$ for this subsequence). Then $(\tau^n, p^n) \to \infty$.

1) Let $\lam^n \in N_1$ for all $n \in \N$. Then we have decompositions~\eq{sinth2N1}, \eq{PN1} and obtain from parameterisation of extremal trajectories~\cite{el_max}
\begin{align}
&R = \frac{16 \E(p) (\E(p) - p)}{r}  
+ \frac{16 k^2 \ss \dd f_1(p) \tsp}{r \D}.
 \label{RN1}
\end{align} 

1.1) Let $\tau^n \to \btau \in [0, + \infty)$, $p^n \to \infty$, $k^n \to 1$.
By Lemmas~\ref{lem:ampi2}, \ref{lem:u11}, we have $\am(p^n, k^n) \to \pi/2$, thus $\sn(p^n) \to 0$, $\sn(\tau^n) \to \sn(\btau, 1) = \tanh \btau < 1$, $\D \to 1 - \tanh^2 \bt > 0$, $\dn(p^n,k^n) \to 1$. By virtue of~\eq{sinth2N1}, we have $\sin(\t^n/2) \to 0$, which contradicts~\eq{sinthPR}.

1.2) The case  $\tau^n \to + \infty$, 
$p^n \to \bp \in [0, \infty)$
is considered similarly to the case 1.1).

1.3)
Let 
$\tau^n \to + \infty$, 
$p^n \to  +\infty$, $k^n \to 1$. 
Then $\am(\tau^n, k^n) \to \pi/2$, 
$\am(p^n, k^n) \to \pi/2$ by Lemmas~\ref{lem:ampi2}, \ref{lem:u11}.

We have $\E(p) = \E(p^n, k^n) = E(\am(p^n, k^n),k^n) \to E(\pi/2, 1) = 1$, thus\\
$
\ds\frac{\E(p)(p - \E(p))}{4 p^2} \to 0
$.
By virtue of the inequalities~\eq{sinthPR} for $R$, there exists a subsequence $\lam^n$ on which $R(q^n) \to - \eps_1 \leq - \eps$. Then the second term in~\eq{RN1} tends to $-\eps_1< 0$, which is impossible since this term is positive.

So the set $S \cap N_1$ does not contain sequences $\lam^n \to \infty$, i.e., it is bounded.

2) Similarly it follows that the sets $S \cap N_2^+$ and $S \cap N_3^+$ are bounded.
Thus the set $S \cap N_2^+$ is bounded.

3) The sets  $S \cap N_1$, $S \cap N_2^+$, $S \cap N_3^+$ are bounded, thus $S$ is bounded as well.

Now we prove that $S$ is closed. Let $\lam^n \in S$, $\lam^n = (p^n, \tau^n, k^n) \to (\bp, \btau, \bk) = \blam \in \cl(L_1)$. We show that $\blam \in S$. 

1) Let $\lam^n \in S \cap N_1$. 

1.1) If $\blam \in L_1$, then $\bq = \Exp(\blam) \in M_+$, on a subsequence $\Exp(\lam^n) \to \bq$, thus $\bq \in K$ and $\blam \in S$.

1.2) Let 
$\blam \notin L_1$,
thus
\be{kbar=0}
\bk = 0 \quad \vee \quad \bk = 1 \quad \vee \quad \btau = 0 \quad \vee \quad \btau = K \quad \vee \quad \bp = 0 \quad \vee \quad \bp = p_1.     
\ee
Each of these conditions leads to a contradiction with inequalities~\eq{sinthPR}.
For example, let $\bk = 0$. Then $k^n \to 0$, $p^n \to \bp$, $\tau^n \to \btau$, thus $\D \to 1$. By~\eq{sinth2N1},  $\sin(\t/2) \to 0$, which contradicts~\eq{sinthPR}.
All other cases in~\eq{kbar=0} are considered similarly. 
Thus $\blam \in S$ in the case $\lam^n \in S \cap N_1$.

2) Similarly, the inclusion $\blam \in S$ follows in the cases $\lam^n \in S \cap N_2$ and $\lam^n \in S \cap N_3$.

We proved that the set $S$ is closed. Since it is bounded as well, it is compact. Thus the mapping $\map{\Exp}{L_1}{M_+}$ is proper.
\end{proof}

Now we can prove Proposition~\ref{propos:ExpdiffeoL1M+}.
\begin{proof}
We check hypotheses of Th.~\ref{th:global_diffeo} for the mapping $\map{\Exp}{L_1}{M_+}$. The sets $L_1$ and $M_+$ are open subsets in a 3-dimensional linear space (Lemmas~\ref{lem:L1} and~\ref{lem:M+-}).  Moreover, we have:

\begin{enumerate}
\item
$\dim L_1 = \dim M_+ = 3$,
\item
$L_1$ and $M_+$ are arcwise connected (Lemmas~\ref{lem:L1con} and~\ref{lem:M+-}),
\item
$M_+$ is simply connected (Lemma~\ref{lem:M+-}),
\item
the mapping $\map{\Exp}{L_1}{M_+}$ is nondegenerate (Propos.~\ref{propos:ExpL1M+nondeg}),
\item
the mapping $\map{\Exp}{L_1}{M_+}$ is proper (Propos.~\ref{propos:ExpL1M+proper}).
\end{enumerate}
By Theorem~\ref{th:global_diffeo}, the mapping $\map{\Exp}{L_1}{M_+}$ is a diffeomorphism.
\end{proof}

By virtue of Lemmas~\ref{lem:epsinsym}, \ref{lem:epsiMdiffeo}, \ref{lem:epsiM+-}, Theorem~\ref{th:Expdiffeo} follows. This theorem implies that
\be{ExpNtildeMtilde}
\Exp(\tN) = \tM.
\ee

\section{Action of exponential mapping \\on the boundary of diffeomorphic domains}\label{sec:boundary}

Define the following subsets in the boundary of the set $\tM$:
\begin{align*}
&M_P = \{q \in \CA \mid P(q) = 0 \}, \\
&M_{\t} = \{q \in \CA \mid \sin(\t/2) = 0 \}, \\
&V = \{(x, y, \t) = (1, 0, 0)\}.
\end{align*}

\begin{proposition}\label{propos:ExpN'M'}

We have
\be{ExpN'M'}
\Exp(N') = M'.
\ee
\end{proposition}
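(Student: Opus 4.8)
\section*{Proof proposal}

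The plan is to establish the two inclusions $\Exp(N') \subseteq M'$ and $M' \subseteq \Exp(N')$ separately. The second inclusion is immediate from what is already available. Since $\map{\Exp}{\hN}{\CA}$ is surjective and $\hN = \tN \sqcup N'$, we have $\CA = \Exp(\tN) \cup \Exp(N') = \tM \cup \Exp(N')$ by~\eq{ExpNtildeMtilde}. Because $\CA = \tM \sqcup M'$ is a disjoint union, any $q \in M'$ lies in $\tM \cup \Exp(N')$ but not in $\tM$, so $q \in \Exp(N')$; hence $M' \subseteq \Exp(N')$. The real content is therefore the inclusion $\Exp(N') \subseteq M'$, which I would verify piece by piece along $N' = N'_{1-3} \sqcup N_4 \sqcup N_5 \sqcup \hN_6 \sqcup N_7$, recalling that $\Exp$ lands in $\CA$ automatically, so it suffices to check $P \sin(\t/2) = 0$ at each endpoint.

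First I would dispose of the degenerate strata. For $\lam \in N_4 \cup N_5 \cup N_7$ the pendulum~\eq{pend} sits at an equilibrium, or has $r = c = 0$, so $c \equiv 0$ along the trajectory; then $\dth = c = 0$ forces $\t \equiv \t_0 = 0$, whence $\sin(\t/2) = 0$ and $\Exp(\lam) \in M'$ (indeed $\Exp(\lam) = (1,0,0)$). For $\lam \in \hN_6$ we have $r = 0$, so $c \equiv c_0 \neq 0$ and $\t(t) = c_0 t$, i.e.\ the trajectory is a circular arc; a direct evaluation of its endpoint at $t = 1$ gives $x = (\sin c_0)/c_0$, $y = (1 - \cos c_0)/c_0$, $\t = c_0$, and a short computation yields $P = x \sin(\t/2) - y \cos(\t/2) = 0$, so again $\Exp(\lam) \in M'$.

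It remains to treat $N'_{1-3} = \{\lam \in \cup_{i=1}^3 N_i \mid \tt(\lam) = 1 \text{ or } \tc \ts = 0\}$ by means of the endpoint formulas~\eq{sinth2N1}--\eq{PN1} in $N_1$ and~\eq{sinth2N2}--\eq{PN2} in $N_2 \cup N_3$. If $\tc \ts = 0$, then $\ts = 0$ or $\tc = 0$, and inspection of these formulas shows that in each case at least one of $P$, $\sin(\t/2)$ vanishes, hence $P \sin(\t/2) = 0$. If instead $\tt(\lam) = 1$, I would translate this into the elliptic coordinate $p$: since $\tt = p_1(k)/p$ in $N_1$ and $\tt = K(k)/p$ in $N_2$ (while $\tt = +\infty$ on $N_3$, so this case is void there), the condition reads $p = p_1(k)$ in $N_1$ and $p = K(k)$ in $N_2$. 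In $N_2$, $p = K$ gives $\cc = \cn p = 0$, so~\eq{sinth2N2} yields $\sin(\t/2) = 0$. In $N_1$, $p = p_1(k) = \min(2K(k), p_1^1(k))$: if $p = 2K$ then $\ss = \sn p = 0$ and~\eq{sinth2N1} gives $\sin(\t/2) = 0$, while if $p = p_1^1(k)$ then $f_1(p) = 0$ by~\eq{f1=0} and~\eq{PN1} gives $P = 0$. In every case $\Exp(\lam) \in M'$, completing the inclusion $\Exp(N') \subseteq M'$.

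I expect the main obstacle to be the subcase $\tt(\lam) = 1$: it requires correctly converting the Maxwell-time condition into the value $p = p_1(k)$ of the elliptic coordinate and then reading off from the explicit endpoint formulas that reaching this value forces $q = \Exp(\lam)$ onto one of the two Maxwell strata $\{P = 0\}$ or $\{\sin(\t/2) = 0\}$. The remaining strata $N_4, N_5, N_7, \hN_6$ and the subcase $\tc \ts = 0$ are routine once the appropriate parameterizations of extremal trajectories are invoked.
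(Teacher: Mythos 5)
Your proposal is correct and follows essentially the same route as the paper: establish $\Exp(N')\subset M'$ stratum by stratum from the explicit endpoint formulas, then upgrade to equality using surjectivity of $\map{\Exp}{\hN}{\CA}$ together with $\Exp(\tN)=\tM$ and the disjointness of the decompositions of $\hN$ and $\CA$. You merely spell out the case analysis (the equilibria $N_4$, $N_5$, $N_7$ mapping to $V$, the circular arcs on $\hN_6$ with $P=0$, and the translation of $\tt(\lam)=1$ into $p=p_1(k)$ or $p=K(k)$) that the paper compresses into references to the parameterisation of extremals and formulas \eq{sinth2N1}--\eq{PN2}.
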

\begin{proof}
Recall decomposition~\eq{N'} of the set $N'$.

It follows from definitions~\eq{N4}--\eq{N7} and the parameterisation of extremal trajectories~\cite{el_max} that
\begin{align}
&\Exp(N_4) = \Exp(N_5) = \Exp(N_7) = V,  \label{ExpN457} \\
&\Exp(\hN_6) \subset M_P \subset M'.  \label{ExpN6}
\end{align}

Further, it follows from formulas~\eq{sinth2N1}--\eq{PN2} that
\be{ExpMthMPM'}
\Exp(N_{1-3}') \subset M_{\t} \cup M_P \subset M'.
\ee

Then we obtain from~\eq{ExpN457}--\eq{ExpMthMPM'} that $\Exp(N') \subset M'$. 
But the mapping $\map{\Exp}{\hN}{\CA}$ is surjective, then equalities~\eq{NhatNtilde}, \eq{AMtilde}, \eq{ExpNtildeMtilde} imply equality~\eq{ExpN'M'}.
\end{proof}

\section{Optimal elasticae \\for various boundary conditions}\label{sec:various}
In this section we describe optimal trajectories for various terminal points $q_1 = (x_1, y_1, \t_1) \in \CA$.

\subsection{Generic boundary conditions}\label{subsec:generic}

let $q_1 \in M_+$, then by Th.~\ref{th:Expdiffeo} there exist a unique $\lam_1 \in L_1$ and a unique $\lam_3 \in L_3$ such that $\Exp(\lam_1) = \Exp(\lam_3) = q_1$. Since $\Exp(L_2) = \Exp(L_4) = M_-$ and $\Exp(N') = M'$, the equation
\be{Explamq1}
\Exp(\lam) = q_1, \qquad \lam \in \hN,
\ee
has only two solutions, $\lam_1$ and $\lam_3$. By virtue of existence of optimal trajectory connecting $q_0$ to $q_1$, it should be $q^1(t) = \Exp_t(\lam_1)$ or $q^3(t) = \Exp_t(\lam_3)$. In order to find the optimal trajectory, one should compare the costs 
$J[q^i] = \frac 12 \int_0^1 (c_t^i)^2 \, dt$, $i = 1, 3$, of the competing candidates $q^1(t)$ and $q^3(t)$ and choose the less one, see Fig.~\ref{fig:2elastica}.

\onefiglabel{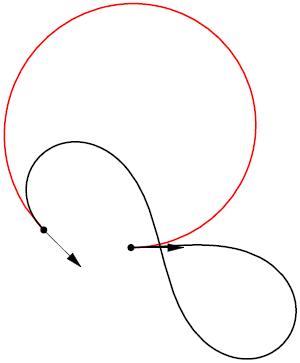}{Competing elasticae with the same boundary conditions}{fig:2elastica}

If $J[q^1] \neq J[q^3]$, then the optimal trajectory is unique. 

If $J[q^1] = J[q^3]$, then there are two optimal trajectories coming to the point $q_1$.
(See example of the corresponding elasticae at Fig.~\ref{fig:2optimal}).  
Such points $q_1$ are  Maxwell points that arise due to some unclear reason different from the reflections $\eps^i$. 

\onefiglabel{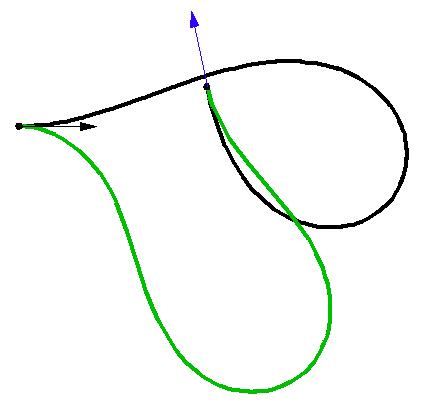}{Two optimal non-symmetric elasticae with the same boundary conditions}{fig:2optimal}

If $q_1 \in M_-$, then the analysis of optimal trajectories is similar to the case $q_1 \in M_+$.

A.~Ardentov designed a software in Mathematica~\cite{math} for numerical computation of optimal elasticae for $q_1 \in \tM$ by solving the equation~\eq{Explamq1}, the software and algorithm are described in~\cite{findelastica}.
An example of a sequence of optimal elasticae computed by this software for a given sequence of boundary conditions is given in Fig.~\ref{fig:family}. 

\onefiglabel{family}{Sequence of optimal elasticae}{fig:family}

\subsection{The case $y_1 = 0$, $\t_1 = \pi$}
\subsubsection{The case $x_1 > 0$}
We have $P(q_1) = x_1 > 0$, thus $q_1 \in M_+$. As shown in Subsec.~\ref{subsec:generic}, the equation~\eq{Explamq1} has solutions $\lam_1 \in L_1$ and $\lam_3 \in L_3$. By~\eq{eps3txy}, $\eps^3(q_1) = q_1$, thus $\eps^3(\lam_1) = \lam_3$. Then the trajectories $q_1(t) = \Exp_t(\lam_1)$ and $q_3(t) = \Exp_t(\lam_3)$ have the same cost, thus they are both optimal. The corresponding optimal inflectional elasticae are symmetric w.r.t. the line $y=0$, see Fig.~\ref{fig:x1g0y10th1pi}. 

\subsubsection{The case $x_1 < 0$}
This case is similar to the case $x_1 > 0$, see Fig.~\ref{fig:x1l0y10th1pi}.

\subsubsection{The case $x_1 = 0$}
It follows from results of Secs.~11.6--11.10~\cite{el_max} that in the case $(x_1, y_1, \t_1) = (0, 0, \pi)$ the equation~\eq{Explamq1} has solutions $\lam = (p, \tau, k) \in N_1$ with $\ts = 0$, $1 - 2 k^2 \ssp = 0$, $2 \E(p) - p = 0$. Then there exists a unique optimal elastica shown in Fig.~\ref{fig:x1=0y10th1pi}.

\begin{figure}[htbp]
\includegraphics[height=0.3\textwidth]{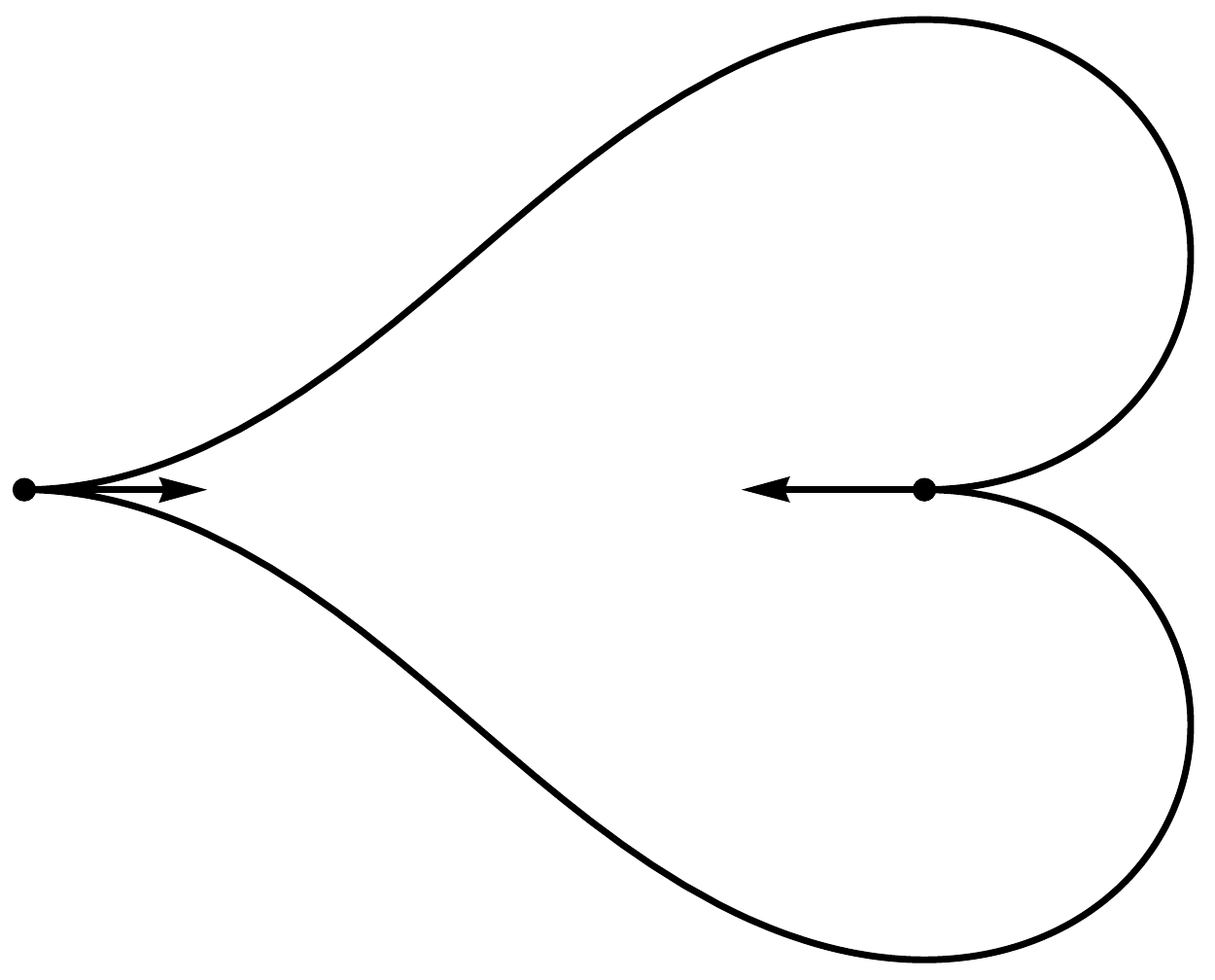}
\hfill
\includegraphics[height=0.25\textwidth]{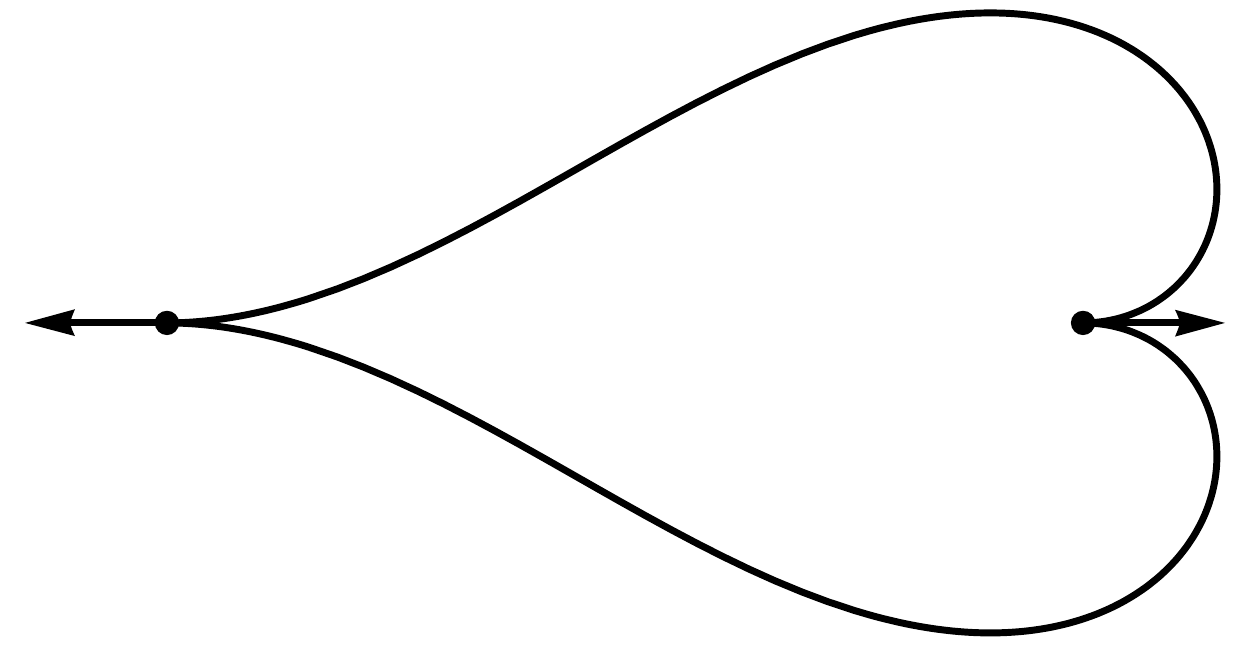}
\\
\parbox[t]{0.45\textwidth}{\caption{Optimal elasticae for $(x_1, y_1, \t_1) = (x_1, 0, \pi)$, $x_1 > 0$}\label{fig:x1g0y10th1pi}}
\hfill
\parbox[t]{0.45\textwidth}{\caption{Optimal elasticae for $(x_1, y_1, \t_1) = (x_1, 0, 0)$, $x_1 < 0$}\label{fig:x1l0y10th1pi}}
\end{figure}

\onefiglabel{x10y10th1pi}{Optimal elastica for $(x_1, y_1, \t_1) = (0, 0, \pi)$}{fig:x1=0y10th1pi}

\subsection{The case $y_1 = \t_1 = 0$}
\subsubsection{The case $x_1 = 0$}
This case was studied in~\cite{el_closed}, it was shown that there exist two optimal elasticae --- circles symmetric w.r.t. the line $y = 0$.


\subsubsection{The case $x_1 \neq 0$}
One can show that in the case $x_1 > 0$ there are two or four optimal elasticae:
there exists $x_* \in (0.4, 0.5)$ such that
\begin{itemize}
\item
if $x_1 \in (0, x_*)$, then there are two optimal non-inflectional elasticae, 
see  Fig.~\ref{fig:x1g0y10th102opt2},
\item
if $x_1 = x_*$, then there are four optimal   elasticae (two inflectional and two non-inflectional ones), 
see  Fig.~\ref{fig:x1g0y10th102opt4},
\item
if $x_1 \in (x_*, 1)$, then there are two optimal inflectional elasticae, 
see  Fig.~\ref{fig:x1g0y10th102opt1}.
\end{itemize}

\twofiglabel{x1g0y10th102opt2}{Optimal elasticae for $(x_1, y_1, \t_1) = (x_1, 0, 0)$, $0 < x_1 < x_*$}{fig:x1g0y10th102opt2}{x1g0y10th102opt1}{Optimal elasticae for $(x_1, y_1, \t_1) = (x_1, 0, 0)$, $x_* < x_1 < 1$}{fig:x1g0y10th102opt1}

\onefiglabel{x1g0y10th104opt}{Optimal elasticae for $(x_1, y_1, \t_1) = (x_*, 0, 0)$}{fig:x1g0y10th102opt4}

 In the case $x_1 < 0$ there are two   optimal non-inflectional elasticae, see Fig.~\ref{fig:x1l0y10th10}.

\onefiglabel{x1l0y10th10}{Optimal elasticae for $(x_1, y_1, \t_1) = (x_1, 0, 0)$, $x_1 < 0$}{fig:x1l0y10th10}

\subsection{The case $x_1 = 1$, $y_1 = 0$, $\t_1 = 0$}
In this case there exists a unique optimal elastica --- the straight line.

\section{Conclusion}\label{sec:concl}
This paper completes our planned study of Euler's elastic problem via geometric control techniques~\cite{notes}. The theoretical analysis describes the structure of optimal solutions and yields effective computation algorithms for numerical evaluation of these solutions for given boundary conditions. We believe that the approach developed in the study of Euler's problem would be useful for other symmetric optimal control problems, e.g. invariant sub-Riemannian problems on 3-D Lie groups~\cite{agrachev_barilari}, nilpotent sub-Riemannian problems~\cite{engel, max3}, problems on rolling sphere~\cite{sphere_roll}, and others.

\end{document}